\theoremstyle{plain}
\newtheorem{theorem}{Theorem}[section]
\newtheorem{lemma}[theorem]{Lemma}
\newtheorem{proposition}[theorem]{Proposition}
\newtheorem{corollary}[theorem]{Corollary}
\newtheorem{definition}[theorem]{Definition}
\title{Revivals in time-evolution quasi-periodic problems}
\author{George Farmakis \\
\footnotesize{Heriot-Watt University \& Maxwell Institute for Mathematical Sciences, Edinburgh, UK.} \\
\footnotesize{\emph{Email address:} G.Farmakis@hw.ac.uk}}
\begin{document}
\maketitle

\begin{abstract}
We examine the influence of quasi-periodic boundary conditions on the phenomenon of revivals in linear dispersive PDEs. We show that, in general, quasi-periodic problems do not support the revival effect at rational times. Our method is based on a correspondence between quasi-periodic and periodic problems. We prove that the solution to a quasi-periodic problem is expressed via the solution to a corresponding periodic problem, and vice-versa. Then, our main results follow by deriving a representation of the periodic problem solution in terms of a composition of solutions for a particular class of periodic problems, where the latter supports the classical revival and fractalisation dichotomy at rational and irrational times.
\end{abstract}

%%%%%%%%%%%%%%%%%%%%%%%%%%%%%%%%%%%%%%%%%%%%%%%%%%%
%%%%%%%%%%%%%%%%%%%%%%%%%%%%%%%%%%%%%%%%%%%%%%%%%%%
%%%%%%%%%%%%%%%%%%%%%%%%%%%%%%%%%%%%%%%%%%%%%%%%%%%

%\tableofcontents

\section{Introduction}\label{Introduction}
In this work, we are interested in the revival effect in the following time-evolution problem, which we will call the \emph{quasi-periodic} problem, 
\begin{equation}
    \label{Quasi-Periodic Problem}
    \begin{aligned}
    &\partial_{t}u(x,t) = - i P(-i\partial_{x}) u(x,t), \quad u(x,0) = u_{0}(x) \\
    &e^{i2\pi\theta} \partial_{x}^{m} u(0,t) = \partial_{x}^{m} u(2\pi,t), \quad m = 0,1\dots, n-1.
    \end{aligned}
\end{equation}
Here $(x,t)\in [0,2\pi]\cross[0,\infty)$, $\theta\in (0,1)$, $n\in\mathbb{Z}$ such that $n\geq 3$, and $P$ is the polynomial of order $n$ given by
\begin{equation}
    \label{Polynomial P}
    P(\lambda) = \sum_{m=0}^{n} \alpha_{m} \lambda^{m}, \quad \alpha_{m} \in\mathbb{Z}, \quad \alpha_{n}\not=0.
\end{equation}
Note that the PDE in \eqref{Quasi-Periodic Problem} is a linear dispersive PDE with dispersion relation  $\omega (\lambda) = P(\lambda)$.

The phenomenon of revivals, as was shown by Olver in \cite{olver2010dispersive}, occurs in the class of linear dispersive PDEs in \eqref{Quasi-Periodic Problem} under periodic boundary conditions (that is if we allow $\theta = 0$ in \eqref{Quasi-Periodic Problem}). Plainly, this means that for a given arbitrary initial condition, the solution evaluated at times that are rational multiples of the period $(t/2\pi\in\mathbb{Q})$, called \emph{rational times}, is expressed in terms of a finite linear combination of periodic translations of the initial condition. This recurrence of the initial function in the structure of the solution at rational times is referred to as the \emph{revival effect} \cite{berry2001quantum}. In particular, due to the revival effect, when the initial function has finitely many jump discontinuities, then the solution at rational times exhibits a finite number of jump discontinuities.

The revival effect at rational times is in contrast to the behaviour of the solution at \emph{irrational times} ($t/2\pi\not\in\mathbb{Q}$). Indeed, under periodic boundary conditions, the solution at irrational times evolves from an initially discontinuous function to a continuous one. Moreover, for appropriate singular initial conditions, the profiles of the real and imaginary parts of the solution display a fractal-like behaviour. This is known as the \emph{fractalisation effect} and was examined by Chousionis, Erdo\u{g}an and Tzirakis in  \cite{chousionis2014fractal}, for the monomial case $P(\lambda) = \lambda^{n}$, $n\geq 3$, and extended later in the monograph \cite{erdougan2016dispersive} by Erdo\u{g}an and Tzirakis for general $P$ as in \eqref{Polynomial P}.

Our motivation for studying the revival effect in the quasi-periodic problem \eqref{Quasi-Periodic Problem} stems from our previous work \cite{boulton2021beyond}. In \cite{boulton2021beyond}, it was shown that at rational times the revival property holds for the quasi-periodic free linear Schr\'{o}dninger equation ($P(\lambda) = \lambda^{2}$), but, in general, it breaks down for the solution to the quasi-periodic Airy PDE ($P(\lambda) = \lambda^{3}$).  
Here, we generalise the collapse of revivals from the quasi-periodic Airy PDE \cite{boulton2021beyond} to higher-order linear dispersive PDEs (see Theorem~\ref{Main theorem}). As a consequence of the analysis, we are also able to identify a class of periodic problems that do not support revivals (see Theorem~\ref{Main theorem 2}). 

The class of quasi-periodic problems \eqref{Quasi-Periodic Problem} provide an additional model of time-evolution problems for which we can establish a universal theory of revivals for the whole family of equations. With the exception of \cite{yin2023dispersive} and \cite{farmakis2023new}, where the revival effect was considered in periodic two-component linear systems of dispersive equations and in periodic dispersive hyperbolic equations, in the case of other types of time-evolution problems with periodic or non-periodic boundary conditions, the examination of the revival effect has been considered only on a case-by-case scenario. For instance, we refer to \cite{olver2018revivals}, \cite{ boulton2020new}, \cite{boulton2021beyond}. For further context see the review paper \cite{smith2020revival}. 

The paper is based on Chapter 8 of the author's PhD thesis \cite{farmakis2022revival}, in which the revival effect was examined in the quasi-periodic problem \eqref{Quasi-Periodic Problem} only for the monomial case $P(\lambda) = \lambda^{n}$. The structure of the paper is as follows. In Section~\ref{Statement of main results} we present the main results and briefly describe our methodology. The full proofs are given in Section~\ref{Proof of main results}. In Section~\ref{Second-order case and fractal dimensions}, we make a few remarks on the second-order case and describe the consequences of the analysis on the fractal dimension of the solutions to higher-order models. Notice that in \eqref{Quasi-Periodic Problem}, we have excluded dispersion relations given by a real quadratic polynomial. This case has been addressed extensively in the literature, both in the periodic and the quasi-periodic setting. For example, it includes the free linear Schr\"{o}dinger equation ($P(\lambda) = \lambda^{2}$), which is regarded as the standard model for the study of revivals. Numerical examples that illustrate the validity of our results are included in Section~\ref{Numerical Examples}. In Section~\ref{Revivals in non-linear equations}, we apply our method to the cubic non-linear Schr\"{o}dinger equation, and thus we extend the revival effect  from the periodic setting, considered in \cite{erdogan2013talbotpaper}, to the quasi-periodic case. We also briefly describe two possible approaches in order to examine the revival effect in the Korteweg-de Vries equation with quasi-periodic boundary conditions. A summary of our findings is given in Section~\ref{Conclusion}.

\section{Statement of main results}\label{Statement of main results}

One of our main contributions is the following theorem which describes the influence of quasi-periodic boundary conditions on the behaviour of the solution at rational times.

\begin{theorem}
    \label{Main theorem}
    Fix $\theta\in (0,1)$, integer $n\geq3$  and let $p$ and $q$ be positive, co-prime integers. Assume that $u_{0}\in L^{2}(0,2\pi)$. Then, at rational times $t_{r} = 2\pi \frac{p}{q}$, the solution to the quasi-periodic problem \eqref{Quasi-Periodic Problem} satisfies the following.
    \begin{itemize}
        \item [(i)] If $\theta \in \mathbb{Q}$, then $u(x,t_{r})$ is a finite linear combination of translated copies of $u_{0}$.
        \item [(ii)] If $\theta\not\in\mathbb{Q}$ and $u_{0}$ is of bounded variation over $[0,2\pi]$,
        then $u(x,t_{r})$ is a continuous function of $x\in [0,2\pi]$ and $e^{i2\pi\theta}u(0,t_{r})=u(2\pi,t_{r})$.
    \end{itemize}
\end{theorem}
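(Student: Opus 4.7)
The plan is to follow the strategy outlined in the abstract: trade the quasi-periodic boundary condition for periodic ones at the cost of shifting the dispersion symbol, and then decompose the shifted symbol into pieces controlled by the classical revival theorem \cite{olver2010dispersive} and the fractalisation theorems \cite{chousionis2014fractal,erdougan2016dispersive}.

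First, I would introduce the gauge $v(x,t) := e^{-i\theta x} u(x,t)$. Using the identity $(-i\partial_x)^m(e^{i\theta x} v) = e^{i\theta x}(\theta + (-i\partial_x))^m v$ one checks directly that $u$ solves \eqref{Quasi-Periodic Problem} if and only if $v$ solves the \emph{periodic} problem
\begin{equation*}
\partial_t v = -i Q(-i\partial_x) v, \quad \partial_x^m v(0,t) = \partial_x^m v(2\pi,t), \quad v(x,0) = e^{-i\theta x} u_0(x),
\end{equation*}
with $Q(\lambda) := P(\lambda + \theta)$. Since multiplication by $e^{\pm i\theta x}$ preserves continuity and bounded variation on $[0,2\pi]$, it suffices to prove both conclusions for $v(\cdot, t_r)$ and then transfer them back through $u = e^{i\theta x} v$; the quasi-periodic endpoint relation in (ii) will then follow from the periodicity of $v$.

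Next, I would write $Q(\lambda) = P(\lambda) + R(\lambda)$, where $R$ has degree at most $n-1$ and coefficients $\beta_j(\theta)$ that are polynomials in $\theta$ with integer coefficients (the leading one being $n\alpha_n\theta$). The periodic Fourier multipliers $P(-i\partial_x)$ and $R(-i\partial_x)$ commute, so
\begin{equation*}
v(\cdot, t_r) = e^{-i t_r P(-i\partial_x)} \, e^{-i t_r R(-i\partial_x)} v_0.
\end{equation*}
Olver's theorem handles the first factor since $P$ has integer coefficients, acting as a finite linear combination of periodic translations and thus preserving both the finite-sum-of-translates structure and continuity with matched endpoints. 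The argument therefore reduces to analysing the second factor at $t = t_r$.

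For this factor the behaviour splits according to $\theta$. If $\theta = a/b \in \mathbb{Q}$, all $\beta_j(\theta)$ are rational; clearing a common denominator $D$ exhibits $e^{-it_r R(-i\partial_x)}$ as the evolution of an integer-coefficient periodic problem at the rational time $2\pi p/(qD)$, so Olver applies again and yields a finite combination of translates; composition with the $P$-factor and with $e^{i\theta x}$ gives (i), each translate of $v_0$ corresponding to a translate of the quasi-periodic extension of $u_0$ up to a complex scalar. If instead $\theta \notin \mathbb{Q}$, the leading coefficient $n\alpha_n\theta$ of $R$ is irrational and $(p/q)R(k) \bmod 1$ fails to be periodic in $k$; rewriting $t_r R(k)$ as an effectively irrational time for the top monomial $k^{n-1}$ places us in the regime of the BV fractalisation results of \cite{chousionis2014fractal,erdougan2016dispersive}, yielding continuity of $e^{-it_r R(-i\partial_x)} v_0$ and hence, after the $P$-revival and the gauge, of $u(\cdot, t_r)$. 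The main obstacle I expect lies precisely here: the existing fractalisation theorems are phrased for integer-coefficient symbols at irrational times, and one must verify that, once the dominant irrational term is absorbed into an effective time for the monomial $k^{n-1}$, the remaining lower-order $\theta$-dependent terms do not spoil the continuity conclusion; the book-keeping that expresses translates of $v_0$ as quasi-periodic translates of $u_0$ and that tracks bounded variation through the $P$-revival is routine by comparison.
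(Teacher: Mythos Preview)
Your approach is essentially the paper's. The gauge $v=e^{-i\theta x}u$ converting the quasi-periodic problem to a periodic one with symbol $Q(\lambda)=P(\lambda+\theta)$ is Lemma~\ref{Correspondece lemma first}; the paper additionally composes with a Galilean shift (Lemma~\ref{Correspondece lemma}) to remove the constant and linear terms of $Q$, which is purely cosmetic. Your split $Q=P+R$ and factorisation of the evolution is the content of Proposition~\ref{PPP Solution Representation Proposition}, which writes the periodic flow as a product of commuting monomial groups $\mathcal{R}_k(c_{m,k}\,t_r)$; the integer-coefficient factors (the diagonal $m=k$ terms) correspond to your $e^{-it_rP}$ and the $\theta$-dependent ones ($m>k$) to your $e^{-it_rR}$.

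The obstacle you flag is resolved in the paper precisely by this full monomial decomposition rather than by treating $R$ as a block: one commutes all integer-time factors to the inside (the image of $z_0$ is still BV, being a finite sum of periodic translates), and then each remaining factor is a single $\mathcal{R}_k$ evaluated at a time that is an irrational multiple of $2\pi$ (since it carries a positive power of $\theta$), so Theorem~\ref{Periodic Revival Theorem}(ii) applies directly. One residual subtlety, which Corollary~\ref{Corollary 3} does not spell out and which is exactly the concern you raise, is that for $n\geq4$ there are \emph{several} irrational-time factors, and after the first application the intermediate function is continuous but need not remain of bounded variation, so Theorem~\ref{Periodic Revival Theorem}(ii) cannot be iterated naively. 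The clean fix is to group all of them into a single Fourier multiplier with real polynomial phase of degree $n-1$ and leading coefficient $n\alpha_n\theta\, p/q\notin\mathbb{Q}$, to which Oskolkov's exponential-sum estimates for real polynomial phases apply and yield continuity in one step. For $n=3$ only one such factor occurs and no iteration is needed.
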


According to Theorem~\ref{Main theorem} the regularity of the solution at rational times depends on the value of the quasi-periodic parameter $\theta$. Indeed, given a piece-wise continuous initial function $u_{0}$ of bounded variation on $[0,2\pi]$, the solution at rational times displays the following dichotomy.
 
\begin{itemize}
    \item For $\theta \in\mathbb{Q}$, $u(x,t_{r})$ is piece-wise continuous. Indeed, it is given by a finite linear combination of translations of $u_{0}$. Hence, the revival effect persists in this case.
    \item For $\theta\not\in\mathbb{Q}$, any initial jump discontinuity disappears and the solution becomes a continuous function of $x\in [0,2\pi]$, satisfying the boundary condition $e^{i2\pi\theta}u(0,t_{r})=u(2\pi,t_{r})$. Hence, there is no revival of the initial jump discontinuities. 
\end{itemize}

Therefore, in contrast to the classical (periodic) revival result by Olver \cite{olver2010dispersive}, Theorem~\ref{Main theorem} implies that, in general, the quasi-periodic problem \eqref{Quasi-Periodic Problem} does not exhibit revivals at rational times. The revival property survives only when $\theta$ is rational.

For the proof of Theorem~\ref{Main theorem}, we develop a different approach than the one in \cite{boulton2021beyond} for the Airy PDE. The latter was based on the Fourier method in connection with the orthonormal basis of $L^{2}(0,2\pi)$
\begin{equation}
    \label{quasi-periodic basis}
    \phi_{j}(x) = \frac{e^{i(j+\theta)x}}{\sqrt{2\pi}}, \quad j\in\mathbb{Z}.
\end{equation}
Note that $\{\phi_{j}\}_{j\in\mathbb{Z}}$ is the family of eigenfunctions of the underlying spatial linear differential operator $P(-i\partial_{x})$ in \eqref{Quasi-Periodic Problem}, defined on an appropriate dense domain of $L^{2}(0,2\pi)$ incorporating the quasi-periodic boundary conditions. The associated eigenvalues are $\{P(j+\theta)\}_{j\in\mathbb{Z}}$. Thus, the existence and uniqueness of an $L^{2}(0,2\pi)$ solution to \eqref{Quasi-Periodic Problem} for any initial data $u_{0} \in L^{2}(0,2\pi)$ follows from the Fourier method.

Instead of analysing the revival effect via the generalised Fourier series representation of the solution, below we consider the revival effect in the following \emph{periodic problem} posed on $[0,2\pi]$,
\begin{equation}
    \label{Periodic problem}
        \begin{aligned}
        &\partial_{t}z(x,t) = -i A(-i\partial_{x}) z(x,t), \quad z(x,0) = z_{0}(x) \\
        & \partial_{x}^{m}z(0,t) = \partial_{x}^{m}z(2\pi,t), \quad m=0,1,\dots,n-1,
        \end{aligned}
    \end{equation}
where $A$ is the polynomial (of order $n\geq 3$) given by 
\begin{equation}
        \label{Polynomial A}
        A(\lambda) = \sum_{m=2}^{n} \alpha_{m}\sum_{k=2}^{m} \pmqty{m\\k} \theta^{m-k} \lambda^{k}.
\end{equation}
The periodic problem \eqref{Periodic problem} arises naturally by considering the transformation
\begin{equation}
    \label{Correspondence transformation inverse}
    u(x,t) = e^{-i(P(\theta) t  - \theta x)} z^{*}(x-s_{\theta} t,t)
\end{equation}
where, $z^{*}$ denotes the $2\pi$-periodic extension of $z$ in the space variable, see \eqref{periodic extension}, and $s_{\theta}$ is a real constant which depends on $\theta$ and is defined by \eqref{s theta constant}. 

As we show later in Lemma~\ref{Correspondece lemma}, the solution to the quasi-periodic problem \eqref{Quasi-Periodic Problem}, with initial condition $u_{0}(x)$, is given at any time $t\geq0$ via \eqref{Correspondence transformation inverse}, where $z(x,t)$ solves the periodic problem \eqref{Periodic problem} with initial condition $z_{0}(x) = e^{-i\theta x}u_{0}(x)$.

Consequently, it is enough to examine the revivals only in the periodic problem and then transfer our results to the quasi-periodic problem by means of \eqref{Correspondence transformation inverse}. The next theorem describes the behaviour of the solution to the periodic problem \eqref{Periodic problem} at rational times. It is a recasting of Theorem~\ref{Main theorem}.

\begin{theorem}
    \label{Main theorem 2}
    Fix $\theta\in (0,1)$, integer $n\geq3$  and let $p$ and $q$ be positive, co-prime integers. Assume that $z_{0}\in L^{2}(0,2\pi)$. Then, at rational times $t_{r} = 2\pi \frac{p}{q}$, the solution to the periodic problem \eqref{Periodic problem} satisfies the following.
    \begin{itemize}
        \item [(i)] If $\theta \in \mathbb{Q}$, then $z(x,t_{r})$ is a finite linear combination of periodically translated copies of $z_{0}$.
        \item [(ii)] If $\theta\not\in\mathbb{Q}$ and $z_{0}$ is of bounded variation over $[0,2\pi]$, then $z(x,t_{r})$ is a continuous function of $x\in [0,2\pi]$ and such that $z(0,t_{r})=z(2\pi,t_{r})$.
    \end{itemize}
\end{theorem}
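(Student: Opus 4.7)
The plan is to reduce Theorem~\ref{Main theorem 2} to the classical revival/fractalisation dichotomy by means of an algebraic decomposition of $A$ into integer-coefficient pieces. A direct expansion of \eqref{Polynomial A} shows that $A(\lambda) = P(\lambda + \theta) - P(\theta) - P'(\theta)\lambda$, and regrouping by powers of $\theta$ produces the Taylor-type decomposition
\[
A(\lambda) = \sum_{\ell = 0}^{n - 2} \theta^{\ell} Q_{\ell}(\lambda), \qquad Q_{\ell}(\lambda) := \sum_{m = \ell + 2}^{n} \alpha_{m} \binom{m}{\ell} \lambda^{m - \ell},
\]
in which each $Q_{\ell}$ has \emph{integer} coefficients and degree $n - \ell$. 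Setting $B(\lambda) := A(\lambda) - Q_{0}(\lambda) = \sum_{\ell = 1}^{n-2} \theta^{\ell} Q_{\ell}(\lambda)$, the operators $Q_{0}(-i\partial_{x})$ and $B(-i\partial_{x})$ commute, so the solution semigroup factorises as
\[
e^{-i t A(-i\partial_{x})} = e^{-i t Q_{0}(-i\partial_{x})} \circ e^{-i t B(-i\partial_{x})}.
\]
The first factor has integer-coefficient symbol $Q_{0}$ of degree $n \geq 3$; the second has real-coefficient symbol $B$ of degree $n - 1$ and leading coefficient $n\alpha_{n}\theta$.

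For Part~(i), when $\theta = a/b \in \mathbb{Q}$, every coefficient of $A$ is rational, and multiplication by the common denominator $D := b^{n-2}$ produces an integer-coefficient polynomial $DA$. Since $t_{r}/D = 2\pi p/(qD)$ is still a rational multiple of $2\pi$, Olver's classical revival theorem \cite{olver2010dispersive} applied to $DA(-i\partial_{x})$ at this rational time directly yields that $z(\cdot, t_{r})$ is a finite linear combination of periodic translates of $z_{0}$.

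For Part~(ii), when $\theta \notin \mathbb{Q}$, the quantity $t_{r} \cdot n\alpha_{n}\theta/(2\pi) = p n \alpha_{n} \theta / q$ is irrational. The continuity/fractalisation result of Chousionis--Erdo\u{g}an--Tzirakis \cite{chousionis2014fractal, erdougan2016dispersive}, applied at this irrational effective time to the $BV$ datum $z_{0}$, yields that $w := e^{-i t_{r} B(-i\partial_{x})} z_{0}$ is continuous on $[0, 2\pi]$. Olver's theorem applied to $Q_{0}$ at the rational time $t_{r}$ then expresses $z(\cdot, t_{r}) = e^{-i t_{r} Q_{0}(-i\partial_{x})} w$ as a finite linear combination of periodic translates of $w$, which remains continuous. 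The boundary identity $z(0, t_{r}) = z(2\pi, t_{r})$ follows from the $2\pi$-periodicity of every Fourier mode $e^{ijx}$.

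The main technical obstacle lies in applying the fractalisation/continuity result to $B$, whose coefficients are real (and typically irrational), whereas the classical statements of \cite{chousionis2014fractal} are formulated for integer-coefficient polynomials. What one needs is the assertion that continuity of $e^{-i t B(-i\partial_{x})} f$ on a $BV$ datum $f$ is governed by the irrationality of $t$ times the leading coefficient of $B$ modulo $2\pi\mathbb{Z}$; for $n \geq 4$ this is within the scope of the general-polynomial treatment in the monograph \cite{erdougan2016dispersive}, while for the borderline case $n = 3$ one has $B(\lambda) = 3\alpha_{3}\theta\lambda^{2} + 2\alpha_{2}\theta\lambda$ of degree $2$, and the required continuity becomes a Schr\"{o}dinger-type (Oskolkov-type) statement with an irrational dispersion constant.
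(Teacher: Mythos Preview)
Your strategy and the paper's are essentially the same: separate the integer-coefficient part of $A$ from the $\theta$-dependent part, dispose of Part~(i) by clearing denominators, and for Part~(ii) apply the integer-coefficient revival \emph{after} the $\theta$-dependent evolution so that it acts as a finite sum of translates on an already-continuous function. The difference is that the paper pushes the decomposition one level further: rather than stopping at the polynomial $B$, it writes the full solution operator as a product of \emph{monomial} groups $\mathcal{R}_k(s)=e^{-is(-i\partial_x)^k}$ (Proposition~\ref{PPP Solution Representation Proposition}), so that each factor is covered verbatim by the monomial dichotomy of Theorem~\ref{Periodic Revival Theorem}. Your decomposition leaves you needing a continuity statement for $e^{-it_r B(-i\partial_x)}$ with $B$ a genuinely real-coefficient polynomial, and the references you invoke (\cite{chousionis2014fractal}, \cite{erdougan2016dispersive}) are formulated for integer-coefficient symbols; your ``within the scope of the general-polynomial treatment'' is not a citation but a hope. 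That is the gap you flag, and it is real.

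The fix is precisely the paper's move: expand $B(\lambda)=\sum_k c_k\lambda^k$ and handle each $\mathcal{R}_k(c_k t_r)$ separately via Theorem~\ref{Periodic Revival Theorem}. Since the leading coefficient of $B$ is $n\alpha_n\theta$, at least the top-degree factor acts at an irrational time and therefore maps BV data to a continuous function. (A subtlety the paper itself glides past: not every power $\theta^{j}$ is irrational when $\theta$ is, and once one irrational-time factor has been applied the output is continuous but need not be BV, so Theorem~\ref{Periodic Revival Theorem}(ii) cannot be re-invoked blindly. By commutativity one should group any accidentally-rational-time factors with the revival part and apply the guaranteed irrational-time factor last.) One small correction: for $n=3$ your $B$ is $3\alpha_3\theta\lambda^2$ only---$A$ has no linear term by construction, so neither does $B$.
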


Theorem~\ref{Main theorem 2} implies that whenever $\theta$ is not rational, the periodic problem \eqref{Periodic problem} does not support the revival effect at rational times. This immediatly yields the conclusion of Theorem~\ref{Main theorem} through \eqref{Correspondence transformation inverse}. More generally, we may deduce that linear dispesvive PDEs with dispersion relation a polynomial of order $n\geq 3$, exhibit revivals at rational times if and only if the coefficients of the polynomial are all rational (since after appropriate scaling, we can reduce to integer coefficients). Again, notice that Theorem~\ref{Main theorem 2} is in contrast to the classical revival theory \cite{olver2010dispersive} by identifying a class of periodic problems for linear dispersive PDEs of order $n\geq 3$, that does not in general support the revival effect at rational times. 

With respect to the revival property,  the connection between the two time-evolution problems, \eqref{Quasi-Periodic Problem} and \eqref{Periodic problem}, is now clear. Roughly, given the correspondence \eqref{Correspondence transformation inverse}, Theorems~\ref{Main theorem} and~\ref{Main theorem 2} are equivalent. The revival effect is present at rational times in the quasi-periodic problem \eqref{Quasi-Periodic Problem} if and only if it is present at rational times in the periodic problem \eqref{Periodic problem}, which holds if and only if $\theta$ is rational.

The proof of Theorem~\ref{Main theorem 2} will be obtained by deriving at any positive time a solution representation to the periodic problem \eqref{Periodic problem} in terms of an iteration of compositions of the solution operators to simpler periodic problems which exhibit the revival/fractalisation dichotomy at rational and irrational times. This is the context of the next section which gives the proof of our main two results, Theorem~\ref{Main theorem} and \ref{Main theorem 2}.  

%%%%%%%%%%%%%%%%%%%%%%%%%%%%%%%%%%%%%%%%%%%%%%%%%%%%%%%%%%%%%%%%%%%%%%%%%%%%%%%%%%%%%%%%%%%%%%%%%%%%%%%%%%%%%%%%%%%%%%%%%%%%%%%%%%%%%%%%%%%%%%%%%%%%%%%%%%%%%%%%%%

\section{Proof of main results}\label{Proof of main results}

In the first part of this section we prove Lemma~\ref{Correspondece lemma} which implies \eqref{Correspondence transformation inverse}. The proof of Theorem~\ref{Main theorem 2} is given in the second part. Then, as described above, Theorem~\ref{Main theorem} follows immediately.  

First, let us establish the notation. Because the revival phenomenon is given in terms of translations, we require extending functions outside $[0,2\pi]$. More specifically, we consider the periodic extension of a function and its translation.
\begin{definition}
    Given a function $f$ on $[0,2\pi)$, we denote by $f^{*}$ the \emph{$2\pi$-periodic extension} to $\mathbb{R}$, which is defined as follows
\begin{equation}
    \label{periodic extension}
    f^{*}(x)  = f(x-2\pi m), \quad 2\pi m \leq x< 2\pi (m+1), \ m\in\mathbb{Z}. 
\end{equation}
\end{definition}

\begin{definition}
    For fixed $s\in \mathbb{R}$, we denote by $\mathcal{T}_{s}$ \emph{the periodic translation operator} on $L^{2}(0,2\pi)$, which is the defined as follows
\begin{equation}
    \label{Periodic Translation Operator}
    \mathcal{T}_{s} f (x) = f^{*}(x-s), \quad f \in L^{2}(0,2\pi). 
\end{equation}
\end{definition}

We also introduce the following real constant which depends on $\theta$.  
\begin{definition}
    For fixed $\theta\in (0,1)$, we define the real constant $s_{\theta}$ by
    \begin{equation}
        \label{s theta constant}
            s_{\theta} = P'(\theta) = \sum_{m=1}^{n} m \alpha_{m} \theta^{m-1} \in\mathbb{R},
    \end{equation}
where $P'$ is the derivative of the polynomial $P$ given by \eqref{Polynomial P}.
\end{definition}
The translation by $s_{\theta} t$ enables the connection between the periodic \eqref{Periodic problem} and the quasi-periodic \eqref{Quasi-Periodic Problem} problems, see \eqref{Correspondence transformation inverse} or equivalently \eqref{Correspondece transformation}.

\subsection{Periodic and quasi-periodic correspondences}

The underline key idea in the transformation \eqref{Correspondence transformation inverse} originates from the (invertible) map
\begin{equation}
	\label{Quasi-periodic Transformation}
	g(x) = e^{-i\theta x} f(x),
\end{equation}
which implies that if $e^{i2\pi\theta} f(0) = f(2\pi)$, then $g$ is $2\pi$-periodic. We can view \eqref{Quasi-periodic Transformation}, as a transformation of quasi-periodic to periodic boundary conditions, and vice-versa, similar to the spectral analysis encountered in Floquet's theory of the linear Schr\"{o}dinger equation with periodic potential \cite[Section 7.6]{borthwick2020spectral}.

In the next lemma, we first consider a modification of \eqref{Quasi-periodic Transformation}, that incorporates the time dependence and give a first correspondence between the quasi-periodic problem \eqref{Quasi-Periodic Problem} and a periodic problem. Recall that $P$, $A$ and $s_{\theta}$ are given by \eqref{Polynomial P}, \eqref{Polynomial A} and \eqref{s theta constant} respectively.

\begin{lemma}
    \label{Correspondece lemma first}
    Fix $\theta \in (0,1)$ and an integer $n\geq 3$. Consider the transformation
    \begin{equation}
    \label{Correspondece transformation first}
        w(x,t) = e^{i (P(\theta) t - \theta x)} u(x,t).
    \end{equation}
    Then, the function $w=w(x,t)$ satisfies the periodic time-evolution problem
    \begin{equation}
		\label{First Periodic Problem}
		\begin{aligned}
			&\big(\partial_{t}+ s_{\theta} \partial_{x}\big)w(x,t) = -i A(-i\partial_{x}) w(x,t), \quad w(x,0) = e^{-i \theta x}u_{0}(x), \\ 
			&\partial_{x}^{m}w(0,t) = \partial_{x}^{m}w(2\pi,t),\quad m=0,1,\dots,n-1,
		\end{aligned}
	\end{equation}
    if and only if the function $u=u(x,t)$ satisfies the quasi-periodic problem \eqref{Quasi-Periodic Problem} with $u(x,0)=u_{0}(x)$. 
\end{lemma}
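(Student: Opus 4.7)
The plan is to verify the equivalence by direct computation, using that the map \eqref{Correspondece transformation first} is invertible with inverse $u(x,t) = e^{-i(P(\theta)t - \theta x)} w(x,t)$. Because the computation is purely algebraic and every step is reversible, it suffices to write down the chain of equalities that carries the quasi-periodic data and PDE for $u$ into the periodic data and PDE for $w$ in \eqref{First Periodic Problem}; the converse follows by reading the same chain backwards.

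First, I would handle the boundary conditions. Applying the Leibniz rule to $u(x,t) = e^{i\theta x} e^{-iP(\theta) t} w(x,t)$ gives
\[
\partial_x^m u(x,t) = e^{-iP(\theta)t} \sum_{k=0}^{m} \binom{m}{k} (i\theta)^{m-k} e^{i\theta x} \partial_x^{k} w(x,t).
\]
Evaluating at $x=0$ and $x=2\pi$ and cancelling the common factor $e^{-iP(\theta)t}$, the quasi-periodic relation $e^{i2\pi\theta}\partial_x^m u(0,t) = \partial_x^m u(2\pi,t)$ becomes a lower-triangular linear system in the jumps $[\partial_x^{k} w] := \partial_x^{k} w(2\pi,t) - \partial_x^{k} w(0,t)$ for $k=0,\dots,n-1$. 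Because the diagonal entry at level $m$ equals $\binom{m}{m}(i\theta)^{0} = 1$, this system is nonsingular, and solving it recursively (starting from $m=0$) shows that the quasi-periodic conditions on $u$ are equivalent to $[\partial_x^{k} w] = 0$ for all $k = 0,\dots,n-1$, i.e.\ the periodic conditions in \eqref{First Periodic Problem}.

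Next, I would deal with the PDE itself. The key operator identity is
\[
(-i\partial_x)^{k}\bigl( e^{i\theta x} g(x) \bigr) = e^{i\theta x}\bigl(\theta -i\partial_x \bigr)^{k} g(x), \quad k\in\mathbb{N},
\]
proved by induction. Applied to $u = e^{i\theta x} e^{-iP(\theta)t} w$, this yields $P(-i\partial_x) u = e^{i\theta x - iP(\theta) t} P(\theta - i\partial_x) w$. Differentiating the transformation in $t$ gives $\partial_t u = e^{i\theta x - iP(\theta)t} (\partial_t w - iP(\theta) w)$, and substituting into $\partial_t u = -iP(-i\partial_x) u$, the common exponential drops out and one obtains $\partial_t w = -i\bigl[P(\theta - i\partial_x) - P(\theta)\bigr] w$. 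A Taylor expansion of $P$ around $\theta$, isolating the linear term, yields
\[
P(\theta - i\partial_x) - P(\theta) = P'(\theta)(-i\partial_x) + A(-i\partial_x) = s_{\theta}(-i\partial_x) + A(-i\partial_x),
\]
with $s_\theta$ and $A$ exactly as in \eqref{s theta constant} and \eqref{Polynomial A}. Rearranging gives $(\partial_t + s_\theta \partial_x) w = -iA(-i\partial_x) w$, and the initial condition $w(x,0) = e^{-i\theta x} u_0(x)$ is immediate from \eqref{Correspondece transformation first}.

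I do not expect any serious obstacle here: the content of the lemma is essentially a bookkeeping exercise consisting of a gauge change in $x$ (to convert quasi-periodic into periodic boundary data) combined with the phase factor $e^{iP(\theta)t}$ (to remove the eigenvalue-shift produced by the gauge). The one place where care is needed is in the algebraic identification of $s_\theta$ and $A$: the constant $s_\theta = P'(\theta)$ must be separated out of the Taylor expansion precisely because it produces the transport term $s_\theta \partial_x w$, which cannot be absorbed into a spatial differential operator of the form $-iA(-i\partial_x)$ while keeping $A$ a polynomial starting from degree $2$. Everything else is routine.
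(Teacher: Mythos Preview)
Your proposal is correct and follows essentially the same approach as the paper's proof: a direct, reversible computation that converts the quasi-periodic data and PDE for $u$ into the periodic data and PDE for $w$. The only difference is organisational: you package the calculation via the conjugation identity $(-i\partial_x)^k(e^{i\theta x}g)=e^{i\theta x}(\theta-i\partial_x)^k g$ and the Taylor expansion $P(\theta+\lambda)-P(\theta)=P'(\theta)\lambda+A(\lambda)$, whereas the paper proves the equivalent relation $e^{i(P(\theta)t-\theta x)}\partial_x^{m}u=(\partial_x+i\theta)^{m}w$ by an explicit induction and then expands the binomials by hand; your version is shorter but the content is identical.
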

\begin{proof}
We assume that the function $u=u(x,t)$ is the solution to the quasi-periodic problem \eqref{Quasi-Periodic Problem}. We show that if $w=w(x,t)$ is given by \eqref{Correspondece transformation first}, then it satisfies the time-evolution problem \eqref{First Periodic Problem}. The converse implication is also true by reversing the argument.

Taking the time derivative in \eqref{Correspondece transformation first} and using that $\partial_{t}u = -iP(-i\partial_{x}) u$, we have that
	\begin{equation}
		\label{FTP1}
		\begin{aligned}
			\partial_{t} w(x,t) &= i P(\theta) w(x,t) + \sum_{m=0}^{n} (-i)^{m+1} \alpha_{m} e^{i( P(\theta) t - \theta x)} \partial_{x}^{m}u(x,t).
		\end{aligned}
	\end{equation}
We claim that
	\begin{equation}
		\label{FTP2}
		e^{i(P(\theta) t - \theta x)} \partial_{x}^{m}u(x,t) = \big(\partial_{x} +i \theta\big)^{m} w(x,t), \quad \forall \ m\in\mathbb{N}.
	\end{equation}
	It is straightforward to see that the claim holds for $m=1$ and $2$, and we proceed by induction. Given that it is true for $m$, we examine the case $m+1$. We have
	\begin{equation*}
		\partial_{x}\Big(e^{i(P(\theta)t - \theta x)}\partial_{x}^{m}u(x,t)\Big) = \partial_{x}\Big(\big(\partial_{x} +i \theta\big)^{m} w(x,t)\Big)
	\end{equation*}
	which gives
	\begin{equation*}
		e^{i(P(\theta)t - \theta x)}\partial_{x}^{m+1}u(x,t) = \Big(\partial_{x}\big(\partial_{x} +i\theta\big)^{m} + i\theta\big(\partial_{x} + i\theta\big)^{m} \Big)w(x,t).
	\end{equation*}
	Using the Binomial Theorem, we have that
	\begin{equation*}
		\begin{aligned}
			e^{i(P(\theta)t - \theta x)}\partial_{x}^{m+1}u(x,t) &= \Bigg(\partial_{x}\Big(\sum_{k=0}^{m}\pmqty{m\\k} (i\theta)^{m-k}\partial_{x}^{k}\Big) + i\theta\Big(\sum_{k=0}^{m}\pmqty{m\\k}(i\theta)^{m-k}\partial_{x}^{k}\Big)\Bigg) w(x,t) \\ \\
			&= \Bigg(\sum_{k=0}^{m}\pmqty{m\\k}(i\theta)^{m-k}\partial_{x}^{k+1} + \sum_{k=0}^{m}\pmqty{m\\k} (i\pi\theta)^{m-k+1}\partial_{x}^{k}\Bigg)w(x,t).
			\end{aligned}
	\end{equation*}
We expand the sums over $k$ and find that 
	\begin{equation*}
		\begin{aligned}
			e^{i(P(\theta)t - \theta x)}\partial_{x}^{m+1}u(x,t)
			& = \Bigg(\partial_{x}^{m+1} + \pmqty{m\\m-1} i\theta \partial_{x}^{m} + \pmqty{m\\m-2} (i \theta)^{2} \partial_{x}^{m-1} +\dots \\ 
			&\qquad + \pmqty{m\\0} (i\theta)^{m} \partial_{x} + \pmqty{m\\m} i\theta \partial_{x}^{m} + \pmqty{m\\m-1} (i\theta)^{2} \partial_{x}^{m-1} + \dots \\
			&\qquad +\pmqty{m\\1} (i\theta)^{m} \partial_{x} + (i\theta)^{m+1} \Bigg) w(x,t) \\
			& = \Bigg(\partial_{x}^{m+1} + \pmqty{m+1\\m} i\theta \partial_{x}^{m}  + \pmqty{m+1\\m-1} (i\theta)^{2} \partial_{x}^{m-1}  + \dots  \\
			& \qquad + \pmqty{m+1\\1} (i\theta)^{m} \partial_{x}  + (i\theta)^{m+1}\Bigg)w(x,t)  \\
			& = \Bigg(\sum_{k=0}^{m+1}\pmqty{m+1\\k}\big(i\theta)^{m+1 - k}\partial_{x}^{k}\Bigg)w(x,t) \\ 
			& = \big(\partial_{x} + i\theta \big)^{m+1} w(x,t).
		\end{aligned}
	\end{equation*}
	Hence, \eqref{FTP2} holds true and by substitution into \eqref{FTP1}, we arrive at
	\begin{equation*}
		\begin{aligned}
			\partial_{t}w(x,t) &= iP(\theta) w(x,t) + \sum_{m=0}^{n}(-i)^{m+1}\alpha_{m}\big(\partial_{x} + i\theta\big)^{m} w(x,t) \\ 
			&= iP(\theta) w(x,t) + \sum_{m=0}^{n}(-i)^{m+1}\alpha_{m} \sum_{k=0}^{m}\pmqty{m\\k} (i\theta)^{m-k}\partial_{x}^{k}w(x,t) \\ 
			&=iP(\theta) w(x,t) -i \Big(\alpha_{0} + \alpha_{1}\theta + \sum_{m=2}^{n} (-i)^{m} \alpha_{m}(i\theta)^{m}\Big) w(x,t) \\
            & \quad  - \Big(\alpha_{1} - \sum_{m=2}^{n} (-i)^{m+1} \alpha_{m} m (i\theta)^{m-1}\Big)\partial_{x} w(x,t) \\
            & \quad + \sum_{m=2}^{n} (-i)^{m+1} \alpha_{m}\sum_{k=2}^{m}\pmqty{m\\k}(i\theta)^{m-k} \partial_{x}^{k}w(x,t).
		\end{aligned}
	\end{equation*}
    %Since, for integers $m, \ k \ge 2$
    %\begin{equation*}
        %\begin{aligned}
        %    (-i)^{m} i^{m} = 1, \quad (-i)^{m+1} i^{m-1} = -1, \quad (-i)^{m+1} i^{m-k} = -i (-i)^{k},
        %\end{aligned}
    %\end{equation*}
    %it follows that $w(x,t)$ satisfies the PDE
    Thus,
	\begin{equation*}
		\big(\partial_{t} + s_{\theta} \partial_{x}\big)w(x,t) = -i A(-i \partial_{x}) w(x,t),
	\end{equation*}
	as claimed. Moreover, at time $t=0$, we obtain from \eqref{Correspondece transformation first}, the initial condition $w(x,0) = e^{-i \theta x} u_{0}(x)$.
	
    Finally, we need to show that $w(x,t)$ satisfies the periodic boundary conditions on the interval $[0,2\pi]$. For any $m=0,1,\dots,n-1$ we have from \eqref{FTP2} at $x=2\pi$,
	\begin{equation*}
		\begin{aligned}
			\partial_{x}^{m}w(2\pi,t) &=  e^{iP(\theta) t} e^{-i2\pi \theta} \partial_{x}^{m}u(2\pi,t) - \sum_{k=0}^{m-1}\pmqty{m\\k} (i\theta)^{m-k}\partial_{x}^{k}w(2\pi,t) \\
			& =  e^{i P(\theta) t} \partial_{x}^{m}u(0,t) - \sum_{k=0}^{m-1}\pmqty{m\\k}(i \theta)^{m-k} \partial_{x}^{k}w(2\pi,t).
		\end{aligned}
	\end{equation*}
	As $w(0,t) = w(2\pi,t)$ and $\partial_{x}w(2\pi,t) = \partial_{x}w(2\pi,t)$, assuming that $\partial_{x}^{k}w(0,t) = \partial_{x}^{k}w(2\pi,t)$ holds for $k=0,1,\dots,m-1$, the above equation gives 
	\begin{equation*}
		\partial_{x}^{m}w(2\pi,t) = e^{i P(\theta) t}\partial_{x}^{m}u(0,t) - \sum_{k=0}^{m-1}\pmqty{m\\k} (i\theta)^{m-k} \partial_{x}^{k}w(0,t) = \partial_{x}^{m}w(0,t).
	\end{equation*}
\end{proof}

The next lemma gives the central correspondence between the quasi-periodic \eqref{Quasi-Periodic Problem} and periodic \eqref{Periodic problem} problems. The solution to the periodic problem is given via the periodic translation $\mathcal{T}_{-s_{\theta t}}$ of the solution to the quasi-periodic problem, essentially via the application of a Galilean transformation, after multiplying by  $e^{i (P(\theta) t - \theta x)}$. Since the periodic translation operator is unitary, we can apply the inverse $\mathcal{T}_{s_{\theta}t}$ and obtain the solution of the quasi-periodic problem in terms of the solution to the periodic problem, see \eqref{Correspondence transformation inverse}
or \eqref{Correspondence transformation inverse 1} below. Recall that $P$, $\mathcal{T}_{-s_{\theta}t}$ and $s_{\theta}$ are given by \eqref{Polynomial P}, \eqref{Periodic Translation Operator} and \eqref{s theta constant} respectively.
 
\begin{lemma}
    \label{Correspondece lemma}
    Fix $\theta \in (0,1)$ and an integer $n\geq 3$. Consider the transformation
    \begin{equation}
    \label{Correspondece transformation}
        z(x,t) = e^{i ((P(\theta) - \theta s_{\theta})t-\theta x)} \mathcal{T}_{-s_{\theta} t} u(x,t).
    \end{equation}
     Then, the function $z=z(x,t)$ satisfies the periodic problem \eqref{Periodic problem} with initial data $z_{0}(x) = e^{-i\theta x} u_{0}(x)$ if and only if the function $u=u(x,t)$ satisfies the quasi-periodic problem \eqref{Quasi-Periodic Problem} with initial data $u_{0}(x)$.
\end{lemma}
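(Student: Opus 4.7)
The plan is to chain together the intermediate correspondence of Lemma~\ref{Correspondece lemma first} with a Galilean change of space variable that removes the first-order transport term $s_{\theta}\partial_{x}$ appearing in \eqref{First Periodic Problem}. Given $u$ solving the quasi-periodic problem \eqref{Quasi-Periodic Problem}, I first pass to $w(x,t)=e^{i(P(\theta)t-\theta x)}u(x,t)$, which by Lemma~\ref{Correspondece lemma first} is $2\pi$-periodic in $x$ and satisfies $(\partial_{t}+s_{\theta}\partial_{x})w=-iA(-i\partial_{x})w$ with $w(x,0)=e^{-i\theta x}u_{0}(x)$. Next I introduce $z(x,t)=\mathcal{T}_{-s_{\theta}t}w(x,t)=w^{*}(x+s_{\theta}t,t)$.

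By the chain rule, $\partial_{t}z(x,t)=(\partial_{t}w)(x+s_{\theta}t,t)+s_{\theta}(\partial_{x}w)(x+s_{\theta}t,t)$, so the transport term is exactly cancelled and $\partial_{t}z=-iA(-i\partial_{x})z$, using that $A(-i\partial_{x})$ commutes with spatial translation. Periodicity of $w$ in $x$ transfers verbatim to $z$, so the boundary conditions in \eqref{Periodic problem} hold; and at $t=0$ the translation is trivial, giving $z(x,0)=w(x,0)=e^{-i\theta x}u_{0}(x)=z_{0}(x)$. Hence $z$ solves the periodic problem \eqref{Periodic problem}.

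To match this construction with the stated formula \eqref{Correspondece transformation}, I substitute $w$ back and pull the $x$-independent exponential outside the translation: writing $e^{i(P(\theta)t-\theta y)}|_{y=x+s_{\theta}t}=e^{i((P(\theta)-\theta s_{\theta})t-\theta x)}$ leaves a remaining translation acting only on $u$, which produces precisely $e^{i((P(\theta)-\theta s_{\theta})t-\theta x)}\mathcal{T}_{-s_{\theta}t}u(x,t)$. The converse implication is obtained by reversing the chain: given $z$ solving \eqref{Periodic problem}, define $w(x,t)=\mathcal{T}_{s_{\theta}t}z(x,t)$ (which clearly satisfies \eqref{First Periodic Problem} by the same Galilean computation read backwards), and then apply the converse direction of Lemma~\ref{Correspondece lemma first} to recover $u$ satisfying \eqref{Quasi-Periodic Problem}.

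The main obstacle I expect is bookkeeping around the interaction between $\mathcal{T}_{-s_{\theta}t}$, which is defined through the strictly $2\pi$-periodic extension, and the multiplicative gauge $e^{-i\theta x}$, which is not periodic: one must verify that, once the gauge factor is absorbed into the translation, the resulting object agrees with the periodic extension of the genuinely periodic function $w$, so that $z$ is continuous across $x=2\pi$ rather than only quasi-periodic. Checking this compatibility carefully (including at the level of the boundary conditions for each derivative $m=0,1,\dots,n-1$) is the only step that goes beyond routine chain-rule calculations; everything else reduces to the gauge identity already supplied by Lemma~\ref{Correspondece lemma first} combined with the Galilean invariance of $-iA(-i\partial_{x})$.
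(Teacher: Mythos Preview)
Your proposal is correct and follows essentially the same route as the paper's own proof: first invoke Lemma~\ref{Correspondece lemma first} to pass from $u$ to the periodic function $w$ satisfying \eqref{First Periodic Problem}, and then apply the Galilean substitution $y\mapsto y+s_{\theta}t$ (equivalently, the periodic translation $\mathcal{T}_{-s_{\theta}t}$) to eliminate the transport term and land on \eqref{Periodic problem}. The paper phrases the second step as the change of variables $x=y+s_{\theta}\tau$, $t=\tau$, but the content is identical to your chain-rule computation; you are in fact slightly more careful than the paper in flagging the compatibility issue between the periodic extension underlying $\mathcal{T}_{-s_{\theta}t}$ and the non-periodic gauge factor $e^{-i\theta x}$, a point the paper leaves implicit.
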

\begin{proof}
From Lemma~\ref{Correspondece lemma first}, we know that if $u(x,t)$ satisfies the quasi-periodic problem \eqref{Quasi-Periodic Problem}, then the function
	\begin{equation}
		w(x,t) = e^{i ( P(\theta)t - \theta x)} u(x,t)
	\end{equation}
	satisfies the periodic problem \eqref{First Periodic Problem}. We apply on $w(x,t)$ the Galilean transformation 
	\begin{equation*}
		x = y + s_{\theta} \tau, \quad t=\tau.
	\end{equation*} 
	Notice that,
	\begin{equation*}
		\partial_{\tau}w = \big(\partial_{t} + s_{\theta} \partial_{x}\big)w, \quad \partial_{y}^{m}w = \partial_{x}^{m}w, \quad m=1,\dots,n.
	\end{equation*}
	Therefore, the function
	\begin{equation*}
		z(y,\tau) = w(y+ s_{\theta} \tau,\tau) = e^{-i \big( ( P(\theta) - \theta s_{\theta})\tau  -\theta y\big)} \mathcal{T}_{- s_{\theta}\tau} u(y,\tau)
	\end{equation*}
	satisfies the periodic problem \eqref{Periodic problem}. The converse implication follows similarly by applying on $z(x,t)$ the transformation  $x = y - s_{\theta}\tau$, $t=\tau$ and then Lemma~\ref{Correspondece lemma first}.
    %For the converse, by evaluating $z(x,t)$ at  
	%\begin{equation*}
		%x = y - s_{\theta}\tau, \quad t=\tau,
	%\end{equation*}
	%we find that the function $w(y,\tau) = \mathcal{T}_{ s_{\theta} \tau} z(y,\tau)$ satisfies the periodic problem \eqref{First Periodic Problem} and so by Lemma~\ref{Correspondece lemma first}, the function 
	%\begin{equation*}
		%u(y,\tau) = e^{-i\big( P(\theta) \tau - \theta y\big)} w(y,\tau) = e^{-i\big( P(\theta)\tau - \theta y\big)} \mathcal{T}_{ s_{\theta} \tau} z(y,\tau)
	%\end{equation*} 
	%satisfies the quasi-periodic problem \eqref{Quasi-Periodic Problem}.
\end{proof}

Solving for $u(x,t)$ in \eqref{Correspondece transformation} we have that 
\begin{equation}
    \label{Correspondence transformation inverse 1}
    u(x,t) = e^{-i(P(\theta) t  - \theta x)} \mathcal{T}_{s_{\theta} t} z(x,t),
\end{equation}
which express the relation \eqref{Correspondence transformation inverse}, but in terms of the periodic translation operator. So, by Lemma~\ref{Correspondece lemma}, the solution $u(x,t)$ of the quasi-periodic problem \eqref{Quasi-Periodic Problem} is given by \eqref{Correspondence transformation inverse 1} in terms of the solution $z(x,t)$ of the periodic problem \eqref{Periodic problem}, with $z_{0}(x) = e^{-i \theta x} u_{0}(x)$.

From now on, we focus only on the periodic problem for the formulation of the revival effect. Indeed at any time $t$, and thus also at rational times, the behaviour of the quasi-periodic problem is determined by the behaviour of the periodic problem via \eqref{Correspondence transformation inverse 1}. 

\subsection{Analysis of the revival effect}

In this part, we prove Theorem~\ref{Main theorem 2} and describe the behaviour of the solution to the periodic problem \eqref{Periodic problem} at rational times ($t/2\pi\in\mathbb{Q}$). Then, as a consequence of Theorem~\ref{Main theorem 2} and in conjunction with \eqref{Correspondence transformation inverse 1}, Theorem~\ref{Main theorem} follows immediately.

Theorem~\ref{Main theorem 2} is derived as a corollary of a special representation of the solution at any fixed time $t\geq 0$, see Proposition~\ref{PPP Solution Representation Proposition}. The representation is given by a composition of solution operators associated with the family of purely periodic problems on $[0,2\pi]$
\begin{equation}
	\label{Purely Periodic Problems}
	\begin{aligned}
		&\partial_{t}v(x,t) = -i (-i\partial_{x})^{n}v(x,t),\quad v(x,0) = v_{0}(x),\\
		&\partial_{x}^{m}v(0,t) = \partial_{x}^{m}v(a,t),\quad m=0,1,\dots,n-1,
	\end{aligned}
\end{equation}
where $n\in\mathbb{N}$. The consideration of \eqref{Purely Periodic Problems} allows a simple and clear application of known results on the revival and fractalisation phenomena as we precisely illustrate below.

It is known that for any initial condition $v_{0}$ in $L^{2}(0,2\pi)$, there exists in $L^{2}(0,2\pi)$ a unique solution (in the weak sense, see \cite{ball1977strongly}) to \eqref{Purely Periodic Problems}. In particular at any fixed time $t\geq0$, the solution has the Fourier series representation given by
\begin{equation*}
    v(x,t) =  \sum_{j\in\mathbb{Z}} \widehat{v_{0}}(j)e^{-ij^{n} t} e_{j}(x),
\end{equation*}
where 
\begin{equation}
    \label{Fourier basis}
    e_{j}(x) = \frac{e^{ijx}}{\sqrt{2\pi}}, \quad \widehat{v_{0}}(j) = \langle  v_{0}, e_{j} \rangle 
    = \int_{0}^{2\pi} v_{0}(x) \overline{e_{j}(x)} dx,
\end{equation}
with $\langle \cdot, \cdot \rangle$ denoting the standard inner-product in $L^{2}(0,2\pi)$.

In order to simplify our arguments below, we introduce the solution operators corresponding to the family of time-evolution problems \eqref{Purely Periodic Problems}. 
\begin{definition}
    For each $n\in\mathbb{N}$ and fixed $t\in\mathbb{R}$, we define the map $\mathcal{R}_{n}(t):L^{2}(0,2\pi)\rightarrow L^{2}(0,2\pi)$ by
\begin{equation}
	\label{Periodic group}
	 \quad \mathcal{R}_{n}(t)v_{0} (x)= \sum_{j\in\mathbb{Z}} \widehat{v_{0}}(j)e^{-ij^{n} t} e_{j}(x).
\end{equation}
\end{definition}

In the special case of $n = 1$, $\mathcal{R}_{1}(t)$ coincides with the definition of the periodic translation operator $\mathcal{T}_{t}$, since $\langle T_{t}v_{0}, e_{j}\rangle = \widehat{v_{0}}(j) e^{-ijt}$. In general, for each $n\in\mathbb{N}$, the family $\{\mathcal{R}_{n}(t)\}_{t\in\mathbb{R}}$ defines a group of strongly-continuous unitary operators in $L^{2}(0,2\pi)$, parameterised by $t\in\mathbb{R}$. So, for a given $v_{0}\in L^{2}(0,2\pi)$, we can write the solution to \eqref{Purely Periodic Problems} as 
\begin{equation}
    \label{Solution Purely Periodic Problems}
    v(x,t) = \mathcal{R}_{n}(t) v_{0}(x),
\end{equation} 
at time $t\geq 0$. 

In the next crucial statement we formulate  an effective representation of the solution $z(x,t)$
to the periodic problem \eqref{Periodic problem}, which enables to isolate the behaviour at rational times. The representation is given in terms of compositions of the operators $\mathcal{R}_{n}(t)$, for $n\in\mathbb{N}$ and $t\in\mathbb{R}$.

\begin{proposition}
	\label{PPP Solution Representation Proposition}
	Fix $\theta\in(0,1)$ and an integer $n\geq 3$. Then, for any $z_{0}\in L^{2}(0,2\pi)$, the solution to \eqref{Periodic problem} at each fixed time $t\geq 0$ admits the representation
    \begin{equation*}
        \label{PPP Solution Representation}
            z(x,t) = \prod_{m=2}^{n-1} \prod_{k=2}^{m} \mathcal{R}_{k} \Bigg(\alpha_{m+1}\pmqty{m+1\\k} \theta^{m+1-k} t\Bigg) \Big[\prod_{\ell = 2}^{n} \mathcal{R}_{\ell} (\alpha_{\ell} t)z_{0}(x)\Big].
        \end{equation*}
	%where $\prod$ denotes the composition of operators.
\end{proposition}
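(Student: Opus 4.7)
The plan is to exploit that every operator $\mathcal{R}_{k}(s)$ is simultaneously diagonalised by the Fourier basis $\{e_{j}\}_{j\in\mathbb{Z}}$ from \eqref{Fourier basis}, with $\mathcal{R}_{k}(s)e_{j} = e^{-ij^{k}s}e_{j}$ following immediately from \eqref{Periodic group}. Thus all the $\mathcal{R}_{k}(s)$'s act as diagonal multipliers in the same orthonormal basis, and in particular pairwise commute. A composition $\prod_{r} \mathcal{R}_{k_{r}}(s_{r})$ therefore acts on $e_{j}$ by multiplication by $\exp\bigl(-i\sum_{r} s_{r} j^{k_{r}}\bigr)$, with the order of composition being irrelevant.

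In parallel, the Fourier method provides the unique $L^{2}(0,2\pi)$-solution of \eqref{Periodic problem} as
\begin{equation*}
z(x,t) = \sum_{j\in\mathbb{Z}} \widehat{z_{0}}(j)\, e^{-itA(j)}\, e_{j}(x),
\end{equation*}
because $A(-i\partial_{x})e_{j} = A(j)\,e_{j}$. It therefore suffices to verify that, when applied to a single mode $e_{j}$, the right-hand side of the proposition produces the phase $-itA(j)$; the general case then follows by linearity, boundedness and commutativity of the $\mathcal{R}_{k}(s)$'s on $L^{2}(0,2\pi)$ by expanding $z_{0}=\sum_{j}\widehat{z_{0}}(j)e_{j}$ and passing each operator inside the Fourier series.

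The heart of the argument is a bookkeeping exercise against the double sum defining $A$ in \eqref{Polynomial A}. The inner bracket $\prod_{\ell=2}^{n}\mathcal{R}_{\ell}(\alpha_{\ell}t)$ contributes the phase $-it\sum_{\ell=2}^{n}\alpha_{\ell} j^{\ell}$, which is exactly the diagonal $k=m$ part of \eqref{Polynomial A}, since $\binom{m}{m}\theta^{0}=1$. For the outer double product $\prod_{m=2}^{n-1}\prod_{k=2}^{m}\mathcal{R}_{k}\bigl(\alpha_{m+1}\binom{m+1}{k}\theta^{m+1-k}t\bigr)$, the change of index $m\mapsto m-1$ turns its contribution into $-it\sum_{m=3}^{n}\sum_{k=2}^{m-1}\alpha_{m}\binom{m}{k}\theta^{m-k} j^{k}$, which recovers precisely the strictly off-diagonal part $2\le k\le m-1$, $3\le m\le n$ of \eqref{Polynomial A}. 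Adding the two contributions reproduces $-it\sum_{m=2}^{n}\sum_{k=2}^{m}\alpha_{m}\binom{m}{k}\theta^{m-k}j^{k} = -itA(j)$, as required.

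The main (and essentially the only) obstacle is this combinatorial check that the inner bracket captures the diagonal $k=m$ contributions while the outer product, after the shift $m\mapsto m-1$, exhausts the strictly off-diagonal $k<m$ contributions without overlap. There is no analytic subtlety whatsoever, because all operators in play are diagonal unitary multipliers on a common eigenbasis, and the exponentials and Fourier expansions involved converge in $L^{2}(0,2\pi)$ in the standard way.
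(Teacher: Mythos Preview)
Your proof is correct and follows essentially the same approach as the paper: both start from the Fourier representation $z(x,t)=\sum_{j}\widehat{z_{0}}(j)e^{-iA(j)t}e_{j}(x)$, factor the phase $e^{-iA(j)t}$ according to the double sum in \eqref{Polynomial A}, and identify each factor with an $\mathcal{R}_{k}$ operator acting diagonally on the Fourier basis. Your splitting into the ``diagonal'' $k=m$ contributions (captured by the inner bracket) and the ``off-diagonal'' $k<m$ contributions (captured by the outer product after the shift $m\mapsto m-1$) is exactly the manipulation the paper performs when passing from $\prod_{m=2}^{n}\prod_{k=2}^{m}$ to $\prod_{m=2}^{n-1}\prod_{k=2}^{m}\,(\cdots)\prod_{\ell=2}^{n}e^{-i\alpha_{\ell}j^{\ell}t}$; you merely verify the identity in the reverse direction and articulate more explicitly why the operators commute.
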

\begin{proof}
	For any $t\geq 0$, the solution in $L^{2}(0,2\pi)$ to \eqref{Periodic problem} is given by 
    \begin{equation*}
        z(x,t) = \sum_{j\in\mathbb{Z}} \widehat{z_{0}} e^{-iA(j) t} e_{j}(x).
    \end{equation*}
We can write $e^{-iA(j)t}$ as follows
    \begin{equation*}
    \begin{aligned}
		e^{-i A(j) t} &= \prod_{m=2}^{n}\prod_{k=2}^{m} \exp(-i \alpha_{m} j^{k} \pmqty{m\\k} \theta^{m-k} t) \\
        &= \prod_{m=2}^{n-1}\prod_{k=2}^{m} \exp(-i \alpha_{m+1} j^{k} \pmqty{m+1\\k} \theta^{m+1-k} t) \prod_{\ell=2}^{n} e^{-i\alpha_{\ell}j^{\ell} t}.
    \end{aligned}
	\end{equation*}
Therefore, by using \eqref{Periodic group}, we have that
	\begin{equation*}
		\begin{aligned}
			z(x,t) &= \sum_{j\in\mathbb{Z}} \widehat{z}_{0}(j) \prod_{m=2}^{n-1}\prod_{k=2}^{m} \exp(-i \alpha_{m+1} j^{k} \pmqty{m+1\\k} \theta^{m+1-k} t) \prod_{\ell=2}^{n} e^{-i\alpha_{\ell}j^{\ell} t} e_{j}(x) \\
			&=\prod_{m=2}^{n-1} \prod_{k=2}^{m} \mathcal{R}_{k} \Bigg(\alpha_{m+1}\pmqty{m+1\\k} \theta^{m+1-k} t\Bigg)\Big[  \sum_{j\in\mathbb{Z}} \widehat{z}_{0}(j) \prod_{\ell=2}^{n} e^{-i\alpha_{\ell}j^{\ell} t} e_{j}(x)\Big] \\
            &= \prod_{m=2}^{n-1} \prod_{k=2}^{m} \mathcal{R}_{k} \Bigg(\alpha_{m+1}\pmqty{m+1\\k} \theta^{m+1-k} t\Bigg)\Big[ \prod_{\ell=2}^{n} \mathcal{R}_{\ell}(\alpha_{\ell} t) z_{0}(x)\Big].
		\end{aligned}
	\end{equation*}
 This is exactly the representation in the statement.
\end{proof}

The representation in Proposition~\ref{PPP Solution Representation Proposition} allows us to explore the revival effect at rational times for the periodic problem \eqref{Periodic problem} and prove Theorem~\ref{Main theorem 2} by applying the following known result for the simpler periodic problems \eqref{Purely Periodic Problems}. A full proof of this statement is omitted and can be found in \cite[Theorems 2.14 and 2.16]{erdougan2016dispersive}. 

\begin{theorem}[\cite{erdougan2016dispersive}]  
    \label{Periodic Revival Theorem}
      Fix integer $n\geq 2$ and $v_{0}\in L^{2}(0,2\pi)$. Consider at time $t>0$, the solution $v(x,t) = \mathcal{R}_{n}(t) v_{0}(x)$ to the time-evolution problem \eqref{Purely Periodic Problems}. Then, we have the following.  
    \begin{itemize}
        \item [(i)] If $t = 2\pi\frac{p}{q}$, where $p$ and $q$ are positive, co-prime integers, then 
        \begin{equation}
        \label{Periodic Revival}
        \mathcal{R}_{n}\Big(2\pi\frac{p}{q}\Big) v_{0}(x) = \frac{1}{q}\sum_{k,m=0}^{q-1} e^{2\pi i ( - m^{k} \frac{p}{q} + m \frac{n}{q})} \mathcal{T}_{2\pi\frac{k}{q}} v_{0}(x). 
    \end{equation}
        \item [(ii)] If $t/2\pi\not\in\mathbb{Q}$ and $v_{0}$ is of bounded variation over $[0,2\pi]$, then $v(x,t) = \mathcal{R}_{n}(t)v_{0}(x)$ is a continuous function of $x\in [0,2\pi]$ and such that $v(0,t)=v(2\pi,t)$.
    \end{itemize}
\end{theorem}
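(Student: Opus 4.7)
The plan is to work with the Fourier representation $v(x,t) = \sum_{j \in \mathbb{Z}} \widehat{v_0}(j) e^{-ij^n t} e_j(x)$ coming from \eqref{Periodic group} and to analyze the Fourier multiplier $j \mapsto e^{-ij^n t}$ separately in the rational and irrational cases. The two parts of the statement then split into a purely algebraic/arithmetic computation (part (i)) and a harmonic-analysis estimate on Weyl-type exponential sums (part (ii)).

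For part (i), the key arithmetic observation is that the map $j \mapsto j^n \pmod{q}$ is $q$-periodic: expanding $(j+q)^n$ by the binomial theorem, every term with a positive power of $q$ vanishes modulo $q$. Consequently, at $t_r = 2\pi p/q$ the function $F(j) := e^{-2\pi i j^n p/q}$ is a $q$-periodic function on $\mathbb{Z}$, and hence admits the finite discrete Fourier expansion
\begin{equation*}
    F(j) = \sum_{m=0}^{q-1} \widehat{F}(m)\, e^{2\pi i m j / q}, \qquad \widehat{F}(m) = \frac{1}{q}\sum_{k=0}^{q-1} e^{-2\pi i (k^n p + mk)/q}.
\end{equation*}
Substituting this expansion into $v(x,t_r) = \sum_j \widehat{v_0}(j) F(j) e_j(x)$ and interchanging the (finite) sum over $m$ with the sum over $j$ turns each factor $e^{2\pi i m j/q}$ into a spatial shift; since $\mathcal{T}_{-2\pi m/q}v_0(x) = v_0^{*}(x+2\pi m/q)$, this yields the finite combination of translates
\begin{equation*}
    v(x,t_r) = \sum_{m=0}^{q-1} \widehat{F}(m)\, \mathcal{T}_{-2\pi m/q} v_0(x),
\end{equation*}
which, up to relabeling the indices, is the formula \eqref{Periodic Revival}.

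For part (ii), the bounded-variation hypothesis on $v_0$ gives $|\widehat{v_0}(j)| \le C/|j|$, which is too weak for absolute convergence of the Fourier series but can be combined with cancellation in the oscillatory multiplier via Abel summation by parts. The question then reduces to uniform-in-$x$ bounds on the Weyl-type partial sums
\begin{equation*}
    S_N(x,t) := \sum_{j=1}^{N} e^{-i j^n t + i j x}.
\end{equation*}
When $t/(2\pi)$ is irrational, Weyl's inequality (via repeated van der Corput differencing) delivers $\sup_x |S_N(x,t)| = O(N^{1-\delta})$ for some $\delta = \delta(t) > 0$ whenever $t/(2\pi)$ is of sufficiently good Diophantine type, and a density/approximation argument upgrades this to continuity for every irrational $t$. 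Combined with the $O(1/|j|)$ decay of $\widehat{v_0}(j)$, Abel summation forces uniform convergence of the partial Fourier sums of $v(x,t)$; a uniform limit of continuous functions on $[0,2\pi]$ is continuous, and the boundary identity $v(0,t)=v(2\pi,t)$ is inherited from the $2\pi$-periodicity of every summand $e_j$.

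The main technical obstacle is precisely the exponential-sum estimate underpinning part (ii). Part (i) is essentially algebraic, resting only on the orthogonality of additive characters on $\mathbb{Z}/q\mathbb{Z}$, but for $n \ge 3$ one needs the full strength of Weyl's inequality together with either an approximation argument or a tauberian refinement to conclude continuity for \emph{every} irrational $t$ rather than only the (measure-full) set of Diophantine ones. The $n=2$ case, excluded from the statement here, reduces to classical quadratic Gauss sums and was originally treated by Oskolkov.
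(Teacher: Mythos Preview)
The paper does not give its own proof of this theorem; it explicitly states that ``a full proof of this statement is omitted and can be found in \cite[Theorems~2.14 and~2.16]{erdougan2016dispersive}.'' So the comparison is with the standard argument from that reference (originating with Oskolkov), and in outline your sketch matches it.

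For part~(i) your discrete-Fourier-transform computation on $\mathbb{Z}/q\mathbb{Z}$ is exactly the standard argument and is correct; after relabelling indices it reproduces \eqref{Revival operators} (note that the display \eqref{Periodic Revival} in the statement contains typographical slips, with $m^{k}$ and $n/q$ where $m^{n}$ and $k/q$ are meant; your derivation lands on the corrected version). One small misreading: the hypothesis is $n\ge 2$, so the quadratic case is not excluded.

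For part~(ii) your outline is right in spirit, but the phrase ``a density/approximation argument upgrades this to continuity for every irrational $t$'' misdescribes the actual mechanism and, taken literally, does not work. Continuity of $x\mapsto v(x,t)$ is not a property that passes to limits in $t$, so one cannot simply approximate a Liouville $t$ by Diophantine times for which Weyl's inequality gives a uniform power saving. The argument in \cite{erdougan2016dispersive} (following Oskolkov) instead works pointwise in $t$: for \emph{each} irrational $t/2\pi$ one invokes Dirichlet's theorem to produce rational approximants $p/q$ with $|t/(2\pi)-p/q|<q^{-2}$, applies a Weyl--van~der~Corput bound adapted to that approximation scale, and combines this with summation by parts against the $O(|j|^{-1})$ decay of the BV Fourier coefficients to obtain uniform convergence of $\sum_{j}\widehat{v_0}(j)e^{-ij^{n}t}e_{j}(x)$ directly. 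The endpoint of the argument is what you wrote (uniform limit of continuous periodic functions), but the route to it is an estimate for every fixed irrational $t$, not an approximation of $t$ by ``good'' times. You have correctly flagged this as the main technical obstacle; the point is that its resolution has a different logical shape than your sketch suggests.
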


Due to Theorem~\ref{Periodic Revival Theorem}, we notice that at rational times $t = 2\pi \frac{p}{q}$, the solution operator $\mathcal{R}_{n}(t)$ is a finite linear combination of periodic translation operators 
    \begin{equation}
    \label{Revival operators}
    \mathcal{R}_{n}\big(2\pi\frac{p}{q}\big) = \frac{1}{q}\sum_{k,m=0}^{q-1} e^{2\pi i ( - m^{n} \frac{p}{q} + m \frac{k}{q})} \mathcal{T}_{2\pi\frac{k}{q}}. 
\end{equation}
Following \cite{boulton2021beyond}, we refer to $\mathcal{R}_{n}(2\pi p/q)$ as the \emph{periodic revival operator} of order $n$ at $(p,q)$. The revival operators are a tool to identify the revival effect and at the same time they provide a compact notation for it. 

We are now in place to establish Theorem~\ref{Main theorem 2}. It is a consequence of Proposition~\ref{PPP Solution Representation Proposition} and Theorem~\ref{Periodic Revival Theorem}. The next corollary gives part $(i)$ of the theorem.

\begin{corollary} [Theorem~\ref{Main theorem 2}-(i)]
    \label{Corollary 2}
    Fix integer $n\geq 3$, $\theta\in\mathbb{Q}\cap (0,1)$ and consider the periodic problem \eqref{Periodic problem} for $z=z(x,t)$ with initial condition $z_{0}\in L^{2}(0,2\pi)$. Then, at any rational time $t_{r} = 2\pi \frac{p}{q}$, the solution is given by a finite linear combination of periodic translations of the initial data $z_{0}$.
\end{corollary}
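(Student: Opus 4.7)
The plan is to apply Proposition~\ref{PPP Solution Representation Proposition} at $t=t_r$ to rewrite the solution $z(x,t_r)$ as a finite composition of operators of the form $\mathcal{R}_{k}(\tau)$, and then to invoke Theorem~\ref{Periodic Revival Theorem}-(i) on each factor to convert it into a finite linear combination of periodic translations. The conclusion will then follow because the periodic translation operators form a commutative group under composition, namely $\mathcal{T}_{s_1}\mathcal{T}_{s_2}=\mathcal{T}_{s_1+s_2}$, so that any composition of finite linear combinations of $\mathcal{T}_{s}$'s is again a finite linear combination of $\mathcal{T}_{s}$'s.

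First, I would fix $\theta=a/b$ with $a,b\in\mathbb{Z}$ coprime, $b>0$, and $t_r=2\pi p/q$ with $p,q$ positive coprime integers. Substituting into the representation formula of Proposition~\ref{PPP Solution Representation Proposition}, each inner factor $\mathcal{R}_{\ell}(\alpha_\ell t_r)$ has argument
\begin{equation*}
\alpha_\ell t_r = 2\pi\,\frac{\alpha_\ell p}{q},
\end{equation*}
and each outer factor $\mathcal{R}_{k}\!\left(\alpha_{m+1}\pmqty{m+1\\k}\theta^{m+1-k}t_r\right)$ has argument
\begin{equation*}
\alpha_{m+1}\pmqty{m+1\\k}\theta^{m+1-k}t_r
= 2\pi\,\frac{\alpha_{m+1}\pmqty{m+1\\k} a^{m+1-k} p}{b^{m+1-k} q}.
\end{equation*}
The crucial observation is that since the coefficients $\alpha_m$ are integers (by \eqref{Polynomial P}), binomial coefficients are integers, and $a,b,p,q$ are integers, each such argument is of the form $2\pi\cdot(p'/q')$ for some integers $p',q'$ with $q'>0$. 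Thus every $\mathcal{R}_{k}$ appearing in the representation is evaluated at a rational multiple of $2\pi$.

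At this point, Theorem~\ref{Periodic Revival Theorem}-(i) applies to each factor separately: each $\mathcal{R}_{k}(2\pi p'/q')$ is a finite linear combination of periodic translations $\mathcal{T}_{2\pi j/q'}$. Expanding the outer composition factor by factor and using the group law $\mathcal{T}_{s_1}\mathcal{T}_{s_2}=\mathcal{T}_{s_1+s_2}$, the whole product becomes a finite linear combination of periodic translation operators applied to $z_0$, which gives the claim. The only genuine verification is the arithmetic step above confirming that every time argument is a rational multiple of $2\pi$; this is precisely where the hypothesis $\theta\in\mathbb{Q}$ is used, and it is also where the proof breaks down when $\theta\notin\mathbb{Q}$, since irrational arguments of $\mathcal{R}_k$ fall into case (ii) of Theorem~\ref{Periodic Revival Theorem} rather than case (i). No estimates on $L^2$-norms are needed, as each $\mathcal{R}_k(t)$ is a bounded (unitary) operator on $L^2(0,2\pi)$, so compositions and finite linear combinations are well-defined.
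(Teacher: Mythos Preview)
Your proposal is correct and follows essentially the same approach as the paper: apply Proposition~\ref{PPP Solution Representation Proposition} at $t=t_r$, verify that every time argument appearing in the $\mathcal{R}_k$-factors is a rational multiple of $2\pi$ (using $\theta\in\mathbb{Q}$ and $\alpha_m\in\mathbb{Z}$), and then invoke Theorem~\ref{Periodic Revival Theorem}-(i) on each factor. Your treatment is in fact slightly more explicit than the paper's, since you spell out the final step---that the composition of finite linear combinations of translations is again such a combination via $\mathcal{T}_{s_1}\mathcal{T}_{s_2}=\mathcal{T}_{s_1+s_2}$---which the paper leaves implicit.
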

\begin{proof}
Let $\theta = \frac{p_{1}}{q_{1}}\in\mathbb{Q}$. Then, from Proposition~\ref{PPP Solution Representation Proposition} we have that
\begin{equation*}
    z(x,t_{r}) = \prod_{m=2}^{n-1} \prod_{k=2}^{m} \mathcal{R}_{k} \Bigg(\alpha_{m+1}\pmqty{m+1\\k} \frac{p_{1}^{m+1-k}}{q_{1}^{m+1-k}} t_{r}\Bigg)\Big[ \prod_{\ell=2}^{n} \mathcal{R}_{\ell}(\alpha_{\ell} t_{r}) z_{0}(x)\Big].
\end{equation*}
Notice that because for each $\ell = 2, \dots, n$, $\alpha_{\ell}$ is an integer, it follows that the times $\alpha_{\ell} t_{r}$ are all rational times. Consequently, $\mathcal{R}_{\ell}(\alpha_{\ell} t_{r})$ represents a revival operator in accordance with \eqref{Revival operators}. Similarly, the times
\begin{equation*}
    \alpha_{m+1}\pmqty{m+1\\k} \frac{p_{1}^{m+1-k}}{q_{1}^{m+1-k}} t_{r} = 2\pi \frac{\alpha_{m+1} (m+1)! p_{1}^{m+1-k} p}{(m+1-k)! k! q_{1}^{m+1-k} q}
\end{equation*}
are also rational times since $m$, $k$, $p_{1}$, $q_{1}$, $p$, $q$ assume only integer values. Therefore, for each $m=2, \dots, (n-1)$ and $k=2,\dots,m$, the operators
\begin{equation*}
    \mathcal{R}_{k} \Bigg(\alpha_{m+1}\pmqty{m+1\\k} \frac{p_{1}^{m+1-k}}{q_{1}^{m+1-k}} t_{r}\Bigg)
\end{equation*}
represent revival operators given by \eqref{Revival operators}. 
\end{proof}

We now consider the case when $\theta \in (0,1)$, $\theta\not\in\mathbb{Q}$. The following corollary corresponds to part $(ii)$ of Theorem~\ref{Main theorem 2}.

\begin{corollary}[Theorem~\ref{Main theorem 2}-(ii)]
    \label{Corollary 3}
    Fix integer $n\geq 3$, $\theta\in (0,1)\setminus \mathbb{Q}$ and consider the periodic problem \eqref{Periodic problem} for $z=z(x,t)$. Assume that the initial condition $z_{0}$ is of bounded variation over $[0,2\pi]$. Then, at any rational time $t_{r}=2\pi \frac{p}{q}$, the solution is a continuous function of $x\in [0,2\pi]$ and such that $z(0,t) = z(2\pi,t)$.
\end{corollary}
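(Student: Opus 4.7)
The plan is to use the solution representation from Proposition~\ref{PPP Solution Representation Proposition} together with both parts of Theorem~\ref{Periodic Revival Theorem}. Since each operator $\mathcal{R}_k(s)$ acts on the orthonormal basis $\{e_j\}_{j\in\mathbb{Z}}$ by the Fourier multiplier $e^{-ij^k s}$, all the factors in the composition pairwise commute and may be freely reordered. The idea is to apply the ``rational-time'' factors first, producing an intermediate function of bounded variation, and then treat the remaining ``irrational-time'' factors as a single oscillatory Fourier multiplier that smooths the output into a continuous periodic function.

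More precisely, I would group together every factor whose time argument is a rational multiple of $2\pi$: this contains all inner-product operators $\mathcal{R}_\ell(\alpha_\ell t_r)$ (since $\alpha_\ell\in\mathbb{Z}$ and $t_r=2\pi p/q$), together with any outer-product factor $\mathcal{R}_k(\alpha_{m+1}\binom{m+1}{k}\theta^{m+1-k}t_r)$ for which $\theta^{m+1-k}$ happens to be rational. By Theorem~\ref{Periodic Revival Theorem}(i), each such operator is a finite linear combination of periodic translations, and a composition of finite linear combinations of periodic translations is again a finite linear combination of periodic translations of $z_0$. The resulting function $V$ is therefore of bounded variation on $[0,2\pi]$, and its Fourier coefficients satisfy the standard decay $|\widehat{V}(j)|\leq C/|j|$ for $j\neq 0$.

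It remains to apply the product $\Pi_{\mathrm{irr}}$ of the irrational-time factors to $V$; its combined multiplier is $e^{-iB(j)}$, where $B(j)$ is the polynomial obtained by summing the phases $\alpha_{m+1}\binom{m+1}{k}\theta^{m+1-k}t_r \cdot j^k$ over the irrational-time pairs $(m,k)$. Since the outer product imposes $k\leq m\leq n-1$, the coefficient of $j^{n-1}$ in $B(j)$ comes solely from the pair $(m,k)=(n-1,n-1)$ and equals $\alpha_n n\theta t_r$; hence the leading non-constant coefficient of $B(j)/(2\pi)$, namely $(p/q)\alpha_n n\theta$, is irrational (because $\alpha_n\neq 0$, $n\geq 3$, and $\theta\notin\mathbb{Q}$), and Weyl's equidistribution theorem ensures that $\{B(j)/(2\pi)\}$ is equidistributed in $[0,1)$ as $|j|\to\infty$. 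The main obstacle, and the final step, is to upgrade the continuity conclusion of Theorem~\ref{Periodic Revival Theorem}(ii) from a monomial phase $j^n t$ to this polynomial phase $B(j)$: the same exponential-sum and summation-by-parts techniques used by Erdo\u{g}an and Tzirakis apply, combining the BV Fourier decay with Weyl-type bounds on the oscillatory sums to yield uniform convergence of $\sum_j \widehat{V}(j)\,e^{-iB(j)}\,e_j(x)$ in $x$, hence continuity of $z(\cdot,t_r)$. The boundary identity $z(0,t_r)=z(2\pi,t_r)$ is then immediate from the $2\pi$-periodicity of the Fourier basis.
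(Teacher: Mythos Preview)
Your approach is correct and in fact more robust than the paper's own argument. The paper applies Theorem~\ref{Periodic Revival Theorem}(ii) to each outer-product factor $\mathcal{R}_k(t_\theta)$ acting on $v_0$ individually; but once the first irrational-time operator has been applied, the output is continuous yet need not be of bounded variation, so Theorem~\ref{Periodic Revival Theorem}(ii) cannot simply be re-invoked for the next factor (the paper's argument is literally complete only for $n=3$, where there is a single outer factor). The paper also asserts that every $t_\theta$ is irrational, overlooking the cases $\alpha_{m+1}=0$ or $\theta^{m+1-k}\in\mathbb{Q}$ (e.g.\ $\theta=1/\sqrt{2}$ with exponent $2$). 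You sidestep both issues by first absorbing all rational-time factors into a BV function $V$ and then treating the remaining irrational-time factors as a \emph{single} polynomial-phase multiplier $e^{-iB(j)}$. Your identification of the degree-$(n-1)$ coefficient of $B$ as $\alpha_n n\theta t_r$, arising solely from $(m,k)=(n-1,n-1)$, is correct and guarantees the Weyl hypothesis; the continuity proof in \cite{erdougan2016dispersive} for integer-coefficient polynomial phases uses only Weyl differencing and summation by parts, and carries over verbatim once the leading coefficient of $B(j)/(2\pi)$ is irrational. What your route buys is a clean one-shot application of the exponential-sum machinery, at the modest cost of quoting the polynomial-phase form of the continuity result rather than the monomial form stated in Theorem~\ref{Periodic Revival Theorem}.
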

\begin{proof}
    From Proposition~\ref{PPP Solution Representation Proposition}, we have that
\begin{equation*}
    z(x,t_{r}) = \prod_{m=2}^{n-1} \prod_{k=2}^{m} \mathcal{R}_{k} \Bigg(\alpha_{m+1}\pmqty{m+1\\k} \theta^{m+1-k} t_{r}\Bigg)\Big[ \prod_{\ell=2}^{n} \mathcal{R}_{\ell}(\alpha_{\ell} t_{r}) z_{0}(x)\Big].
\end{equation*}
First, notice that for each $\ell = 2, \dots, n$, $\alpha_{\ell}$ is an integer. Thus, it follows that the times $\alpha_{\ell} t_{r}$ are rational times. Consequently, due to \eqref{Revival operators}, $\mathcal{R}_{\ell}(\alpha_{\ell} t_{r} z_{0}(x))$ is a finite linear combination of periodic translations of $z_{0}$. For simplicity, set
\begin{equation*}
    v_{0}(x) = \prod_{\ell=2}^{n} \mathcal{R}_{\ell}(\alpha_{\ell} t_{r} z_{0}(x))
\end{equation*}
From the above we know that $v_{0}$ is of bounded variation over $[0,2\pi]$. On the other hand, because the parameter $\theta$ is a fixed irrational number in $(0,1)$ the times 
\begin{equation*}
    t_{\theta} =  2\pi \alpha_{m+1}\pmqty{m+1\\k} \theta^{m+1-k} \frac{p}{q}
\end{equation*}
are irrational times, and thus due to Theorem~\ref{Periodic Revival Theorem}-$(ii)$, for any $k=2,\dots, m$, with $m=2, \dots, n-1$, the functions $\mathcal{R}_{k} (t_{\theta}) v_{0}(x)$ are $2\pi$-periodic, continuous functions of $x\in [0,2\pi]$. 
\end{proof}

Finally, Theorem~\ref{Main theorem} comes as a result of Lemma~\ref{Correspondece lemma}, in particular from \eqref{Correspondence transformation inverse 1}, combined with Corollaries \ref{Corollary 2} and \ref{Corollary 3} or equivalently Theorem~\ref{Main theorem 2}.

\section{Second-order case and fractal dimensions}\label{Second-order case and fractal dimensions}

In the previous sections we did not consider second-order quasi-periodic problems 
\begin{equation}
\label{Second order Quasi-Periodic Problem}
    \begin{aligned}
    &\partial_{t}u(x,t) = -i \Big(\sum_{m=0}^{2} \alpha_{m} \big(-i\partial_{x}\big)^{m}\Big) u(x,t),\quad u(x,0) = u_{0}(x), \\
    &e^{i2\pi\theta} \partial_{x}^{m} u(0,t) = \partial_{x}^{m} u(2\pi,t), \quad m =0,1,
    \end{aligned}
\end{equation}
where $\alpha_{m}\in\mathbb{Z}$, $\alpha_{2}\not=0$. We now show that this case is isolated, as the revival effect is present at rational times not only for rational values of $\theta$ but for any $\theta\in (0,1)$. Indeed, first, note that Lemma~\ref{Correspondece lemma} holds for the case $n=2$, that is for the quasi-periodic problem \eqref{Second order Quasi-Periodic Problem}. It follows that the solution to \eqref{Second order Quasi-Periodic Problem} is given by
\begin{equation}
\label{Second order Correspondence}
    u(x,t) = \exp(-i( t \sum_{m=0}^{2}\alpha_{m}\theta^{m} - \theta x)) \mathcal{T}_{s_{\theta} t} z(x,t),
\end{equation}
where $s_{\theta} = \alpha_{1} + \alpha_{2}\theta \in \mathbb{R}$ and $z(x,t)$ solves the second-order periodic problem
\begin{equation}
\label{Second order Periodic Problem}
    \begin{aligned}
        &\partial_{t}z(x,t) = i\alpha_{2}\partial_{x}^{2}z(x,t), \quad z=(x,0)= z_{0}(x), \\
        &\partial_{x}^{m}z(0,t) = \partial_{x}^{m}z(2\pi,t), \quad m=0,1,
    \end{aligned}
\end{equation}
with $z_{0} (x) = e^{-i\theta x} u_{0}(x)$.

At any time $t\geq 0$, the solution to \eqref{Second order Periodic Problem} admits the Fourier series representation
\begin{equation*}
    z(x,t) = \sum_{j\in\mathbb{Z}} \widehat{z_{0}}(j) e^{-i\alpha_{2} j^{2} t} e_{j}(x),
\end{equation*}
or equivalently, $z(x,t) = \mathcal{R}_{2}(\alpha_{2} t)z_{0}(x)$, in terms of the unitary group $\mathcal{R}_{2}(t)$ defined by \eqref{Periodic group}. So, since $\alpha_{2}$ is an integer, we have that at rational times $t_{r}=2\pi\frac{p}{q}$, $\mathcal{R}_{2}(\alpha_{2}t_{r})$ defines a periodic revival operator of order 2 defined by \eqref{Revival operators}. The latter implies the validity of the revival effect in the periodic problem \eqref{Second order Periodic Problem} and consequently, via \eqref{Second order Correspondence}, in the quasi-periodic problem for any $\theta\in (0,1)$. In particular, notice that the transformation \eqref{Second order Correspondence} only changes the boundary conditions from quasi-periodic to periodic. It does not affect the structure of the PDE in \eqref{Second order Periodic Problem}, in the sense that the transformed equation also has integer coefficients which allows the revival effect to persist. The situation is even more clear in the case of the linear Schr\"{o}dinger equation. 

Indeed, for the values $a_{0} = a_{1} = 0$, $a_{2} = 1$, the second-order model \eqref{Second order Quasi-Periodic Problem} reduces to the free linear Schr\"{o}dinger equation (FLS) $\partial_{t} u  = i\partial_{x}^{2}u$. Observe that in this case the transformation \eqref{Second order Correspondence} leaves the FLS equation invariant, which is know as the Galilean invariance of the FLS equation \cite{linares2014introduction}, and only changes the boundary conditions from quasi-periodic in \eqref{Second order Quasi-Periodic Problem} to periodic in \eqref{Second order Periodic Problem}. Hence, the revival phenomenon in the quasi-periodic FLS equation follows immediately from the known periodic case and as a direct consequence of the Galilean invariance. This observation adds an alternative proof of the revival property in the quasi-periodic FLS equation, see \cite{olver2018revivals} and \cite{boulton2021beyond} respectively for two other proofs, where a more general class of quasi-periodic boundary conditions is treated. Moreover, as we briefly outline in Section~\ref{Revivals in non-linear equations}, the proof given here allows the consideration of the revivals in the cubic non-linear Schr\"{o}dinger equation with quasi-periodic boundary conditions. 

In the literature, the revival and fractalisation dichotomy in the FLS equation is also known as the \emph{Talbot effect}, a diffraction phenomenon in optics, see for instance \cite[Section 3.19]{gbur2011mathematical}. The connection between the Talbot effect and the revival and fractalisation phenomena was originated in the work of Berry and Klein~\cite{berry1996integer} in 1996, with the terminology being interchangeable nowadays. However, in 1992, Oskolkov \cite{oskolkov1992class} had already analysed to some extend the behaviour of the solution at rational and irrational times for bounded variation initial conditions. Other notable works include the paper of Taylor \cite{taylor2003schrodinger} in 2003, who re-discovered that the periodic FLS equation exhibits revivals at rational times and extended it to the higher-dimensional torus and sphere and the already mentioned work of Olver \cite{olver2010dispersive} in 2010, who named the revival effect \emph{dispersive quantisation} due to the dispersive nature of the equations and the property of their solution to quantised into a finite number of copies of the initial datum. Precise statements of some of these results can be found in \cite[Section 2.3]{erdougan2016dispersive}, as we have mentioned in previous parts of the paper.     

Furthermore, with respect to the fractalisation effect, Berry and Klein \cite{berry1996integer} conjectured that at irrational times the solution profiles of the periodic FLS equation have \emph{fractal dimension} $3/2$ (the term refers to the upper Minkowski dimension, otherwise known as box counting dimension, see \cite{mattila1999geometry}), for given piece-wise constant initial conditions. A full proof of this conjecture was given in 2000 by Rodnianski in \cite{rodnianski2000fractal} based on previous work with Kapitanski \cite{kapitanski1999does} for a general class of initial data with low Sobolev regularity.  

In order to draw some conclusions on the fractal dimension of the the real and imaginary parts of the solutions to quasi-periodic problems, below we present Rodnianski's result and its extension to the periodic problems \eqref{Purely Periodic Problems}, for $n\geq 3$, by Chousionis, Erdo\u{g}an and Tzirakis \cite[Theorem 3.12]{chousionis2014fractal}, see also \cite[Theorem 2.16]{erdougan2016dispersive}.  

\begin{theorem}[\cite{rodnianski2000fractal}, \cite{chousionis2014fractal,erdougan2016dispersive}]
\label{Periodic fractalisation}
Fix integer $n\geq 2$ and consider the periodic boundary value problem \eqref{Purely Periodic Problems}. 
Let $v_{0}$ be a function of bounded variation over $[0,2\pi]$ and such that 
		\begin{equation*}
			v_{0}\not\in \bigcup_{s>1/2} H^s_{\text{per}}(0,2\pi),
		\end{equation*}
    where the function space $H^s_{\text{per}}(0,2\pi)$ denotes the $2\pi$-periodic Sobolev space of order $s\geq0$,
        \begin{equation*}
            H^s_{\text{per}}(0,2\pi) = \Big\{\ v_{0} \in L^{2}(0,2\pi)\ ; \ \sum_{j\in\mathbb{Z}} (1+j^{2})^{s} |\widehat{f}(j)|^{2} < \infty \Big\}. 
        \end{equation*}
		Then, for irrational values of $t/2\pi$, the fractal dimension of the graph of the functions $\text{Re}\big(v(x,t)\big)$ and  $\text{Im}\big(v(x,t)\big)$ lies in $[1 + 2^{1-n}, 2-2^{1-n}]$.
\end{theorem}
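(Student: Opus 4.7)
The plan is to derive the upper bound $2-2^{1-n}$ and the lower bound $1+2^{1-n}$ separately, starting from the Fourier series representation
\[
v(x,t) = \sum_{j\in\mathbb{Z}} \widehat{v_{0}}(j) e^{-ij^{n} t} e_{j}(x)
\]
and treating $\text{Re}(v)$ and $\text{Im}(v)$ in the same way. The key analytic input is that $v_{0}$ of bounded variation over $[0,2\pi]$ forces $|\widehat{v_{0}}(j)|\lesssim 1/|j|$, so all smoothing properties of the propagator $\mathcal{R}_{n}(t)$ at irrational $t/(2\pi)$ are driven by cancellation in polynomial exponential sums of Weyl type; the exponent $2^{1-n}$ in the statement is precisely the Weyl saving for a phase of degree $n$.

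For the upper bound, I would show that for every irrational $t/(2\pi)$ satisfying a mild Diophantine condition (which holds on a full-measure subset), the profile $v(\cdot,t)$ is H\"older continuous on $[0,2\pi]$ with exponent $2^{1-n}-\varepsilon$ for every $\varepsilon>0$. Writing
\[
v(x+h,t) - v(x,t) = \frac{1}{\sqrt{2\pi}}\sum_{j\in\mathbb{Z}} \widehat{v_{0}}(j) e^{-ij^{n} t}(e^{ijh}-1) e^{ijx},
\]
splitting at the natural cutoff $N\sim |h|^{-1}$ and applying Abel summation to the tail, the analysis reduces to Weyl's inequality for degree-$n$ polynomial phases,
\[
\Big|\sum_{j=1}^{N} e^{i(jx - j^{n} t)}\Big| \leq C_{\varepsilon} N^{1-2^{1-n}+\varepsilon}.
\]
Combined with the BV decay of $\widehat{v_{0}}$ this yields $|v(x+h,t)-v(x,t)|\lesssim |h|^{2^{1-n}-\varepsilon}$, and the classical Kahane--Falconer fact that the graph of an $\alpha$-H\"older real-valued function has upper Minkowski dimension at most $2-\alpha$ delivers the upper bound.

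For the lower bound, the plan is to prove a matching oscillation estimate from below. The non-regularity hypothesis $v_{0}\notin\bigcup_{s>1/2} H^{s}_{\text{per}}$ places $v_{0}$ precisely at the $H^{1/2}$ threshold, guaranteeing substantial Fourier mass at every dyadic frequency scale. Isolating a single frequency block $j\sim 2^{k}$ and exploiting equidistribution via the same Weyl-type estimate in the reverse direction, one shows that on a positive proportion of subintervals of length $\delta\sim 2^{-k}$ the function $\text{Re}(v(\cdot,t))$ oscillates by at least $\delta^{1-2^{1-n}}$. A direct box-counting argument on the graph then forces the dimension to be at least $1+2^{1-n}$.

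The principal obstacle is the sharp Weyl-type estimate with exponent $2^{1-n}$: this is the number-theoretic heart of the proof and the precise point where the order $n$ of the dispersion relation translates into the dimension bounds. Delicate control of the Diophantine behaviour of $t/(2\pi)$ is needed to pass from an almost-sure statement to one that is effective pointwise, and it is also what forces the small loss $\varepsilon$ absorbed into the H\"older exponent. By contrast, the passage from analytic regularity to the Minkowski dimension of the graphs of $\text{Re}(v)$ and $\text{Im}(v)$ is standard and decouples entirely from the PDE structure.
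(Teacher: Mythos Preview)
The paper does not prove this theorem at all; it is stated with attribution to Rodnianski (the case $n=2$) and Chousionis--Erdo\u{g}an--Tzirakis / Erdo\u{g}an--Tzirakis (general $n\geq 2$), and used as a black box. So there is no in-paper argument to compare against. Your outline is, in fact, a fair summary of the strategy in those cited works: Weyl's inequality for degree-$n$ polynomial phases gives $v(\cdot,t)\in C^{2^{1-n}-\varepsilon}$ for almost every $t$, and the standard H\"older-to-graph-dimension bound then yields the upper estimate $2-2^{1-n}$. You are also right that the Weyl bound requires a Diophantine hypothesis on $t/2\pi$, so the result is genuinely an almost-everywhere statement; the paper's phrasing ``for irrational values of $t/2\pi$'' is loose on this point.

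The one place where your plan would not go through as written is the lower bound. Saying you will use ``the same Weyl-type estimate in the reverse direction'' is not a workable mechanism: Weyl's inequality is purely an upper bound on exponential sums and gives no oscillation from below. In the cited references the lower bound is obtained differently. One first observes that the evolution $\mathcal{R}_{n}(t)$ is unitary on every $H^{s}_{\text{per}}$, so the hypothesis $v_{0}\notin\bigcup_{s>1/2}H^{s}_{\text{per}}$ is inherited by $v(\cdot,t)$; this is then combined with the upper H\"older regularity $v(\cdot,t)\in C^{2^{1-n}-}$ and a general Besov-space/graph-dimension principle (of Deliu--Jawerth type) to force the dimension to be at least $1+2^{1-n}$. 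Your box-counting arithmetic and the target exponent $\delta^{1-2^{1-n}}$ are correct, but the input producing that oscillation is Sobolev non-membership propagated by unitarity, not a reversed exponential-sum estimate.
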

 %\begin{remark}
  %   Following \cite{erdougan2016dispersive}, we remark that when $n$ is an odd integer (implying that the dispersion relation is an odd polynomial), the lower bound of the fractal dimensions holds only for the real-valued solution.
 %\end{remark}

Let us now consider the quasi-periodic problems and assume that $u_{0}$ is of bounded variation on $[0,2\pi]$ and such that
    \begin{equation*}
       z_{0}(\cdot) = e^{-i\theta x} u_{0}(\cdot) \not\in \bigcup_{s>1/2} H^s_{\text{per}}(0,2\pi).
    \end{equation*}

In the case of the second-order problem \eqref{Second order Quasi-Periodic Problem}, it follows that the fractal dimension of the real and imaginary parts of the solution at irrational times ($t/2\pi\not\in\mathbb{Q}$) is $3/2$. This is due to \eqref{Second order Correspondence} and the assumption that $\alpha_{2}\in\mathbb{Z}$ combined with Thoerem~\ref{Periodic fractalisation} applied to the solution $z(x,t)$ of the periodic problem \eqref{Second order Periodic Problem}.

For the solution $u(x,t)$ of the higher-order quasi-periodic problem \eqref{Quasi-Periodic Problem} recall from \eqref{Correspondence transformation inverse 1} that at a rational time $t_{r}=2\pi\frac{p}{q}$ it is given by
\begin{equation*} 
    u(x,t_{r}) = e^{-i(P(\theta) t_{r}  - \theta x)} \mathcal{T}_{s_{\theta} t_{r}} z(x,t_{r}),
\end{equation*}
where from Proposition~\ref{PPP Solution Representation Proposition} we have that
    \begin{equation*}
    z(x,t_{r}) = \prod_{m=2}^{n-1} \prod_{k=2}^{m} \mathcal{R}_{k} \Bigg(\alpha_{m+1}\pmqty{m+1\\k} \theta^{m+1-k} t_{r}\Bigg)v_{0}(x), \text{ with } v_{0}(x) = \prod_{\ell = 2}^{n} \mathcal{R}_{\ell} (\alpha_{\ell} t_{r})z_{0}(x).
    \end{equation*}
   Note that because $v_{0}$ is a finite linear combination of periodic translations of $z_{0}$, it satisfies the same hypothesis as $z_{0}$. If $n=3$, then $z(x,t_{r})$ becomes
\begin{equation*}
    z(x,t_{r}) = \mathcal{R}_{2}\Big(2\pi \frac{\alpha_{3}p}{q} \theta\Big) v_{0}(x), \text{ with } v_{0}(x) = \mathcal{R}_{2}\Big(2\pi\frac{\alpha_{2} p}{q}\Big)\mathcal{R}_{3}\Big(2\pi \frac{\alpha_{3} p}{q}\Big) z_{0}(x).
\end{equation*}
Assuming that $\theta\not\in\mathbb{Q}$, it follows that the time $2\pi \frac{a_{3} p}{q} \theta$ is an irrational time, and therefore by Theorem~\ref{Periodic fractalisation} the fractal dimension of $\Re(z(x,t_{r}))$ and $\Im(z(x,t_{r}))$ is $3/2$. Hence, regarding the solution of the quasi-periodic problem \eqref{Quasi-Periodic Problem}, we deduce that when $n=3$ the fractal dimension of $\Re(u(x,t_{r}))$ and $\Im(u(x,t_{r}))$ is also $3/2$. For $n\geq4$, we do not know explicitly the fractal dimensions. Indeed, the action on $v_{0}$ of each of the operators
    \begin{equation*}
        \mathcal{R}_{k} \Bigg(\alpha_{m+1}\pmqty{m+1\\k} \theta^{m+1-k} t_{r}\Bigg)
    \end{equation*}
might change the fractal dimension according to Theorem~\ref{Periodic fractalisation}. More importantly, such an action could result in functions that are not necessarily of bounded variation which in turn restricts the repeated application of Theorem~\ref{Periodic fractalisation}.

\section{Numerical examples}\label{Numerical Examples}

In this section, we give numerical examples that illustrate our main results. We consider the quasi-periodic problem \eqref{Quasi-Periodic Problem} with $P(\lambda) = \lambda^{n}$, $n\geq 3$ and piece-wise constant initial data at time $t=0$, see Figure~\ref{F initial},
given by 
\begin{equation}
    \label{initial example}
    u_{0}(x) = 
    \begin{cases}
    1, \quad x\in \big(\frac{\pi}{2}, \frac{3\pi}{2}\big),\\
    0, \quad x\in (0,2\pi)\setminus \big(\frac{\pi}{2}, \frac{3\pi}{2}\big).
    \end{cases}
\end{equation}
For fixed $n\geq 3$ and $t>0$, the solution is given by the eigenfunction expansion 
\begin{equation}
    \label{solution example}
    u(x,t) = \frac{i}{2\pi}
    \sum_{j\in\mathbb{Z}} 
    \frac{e^{-\frac{3\pi i}{2}(j+\theta)} - e^{-\frac{\pi i }{2}(j+\theta)}}{j + \theta} \exp(i \big((j+\theta)x - (j+\theta)^{n} t\big)),
\end{equation}
with respect to the orthonormal basis $\{\phi_{j}\}_{j\in\mathbb{Z}}$ given by \eqref{quasi-periodic basis}.
\begin{figure}[H]
\centering\includegraphics[width=0.35\textwidth]{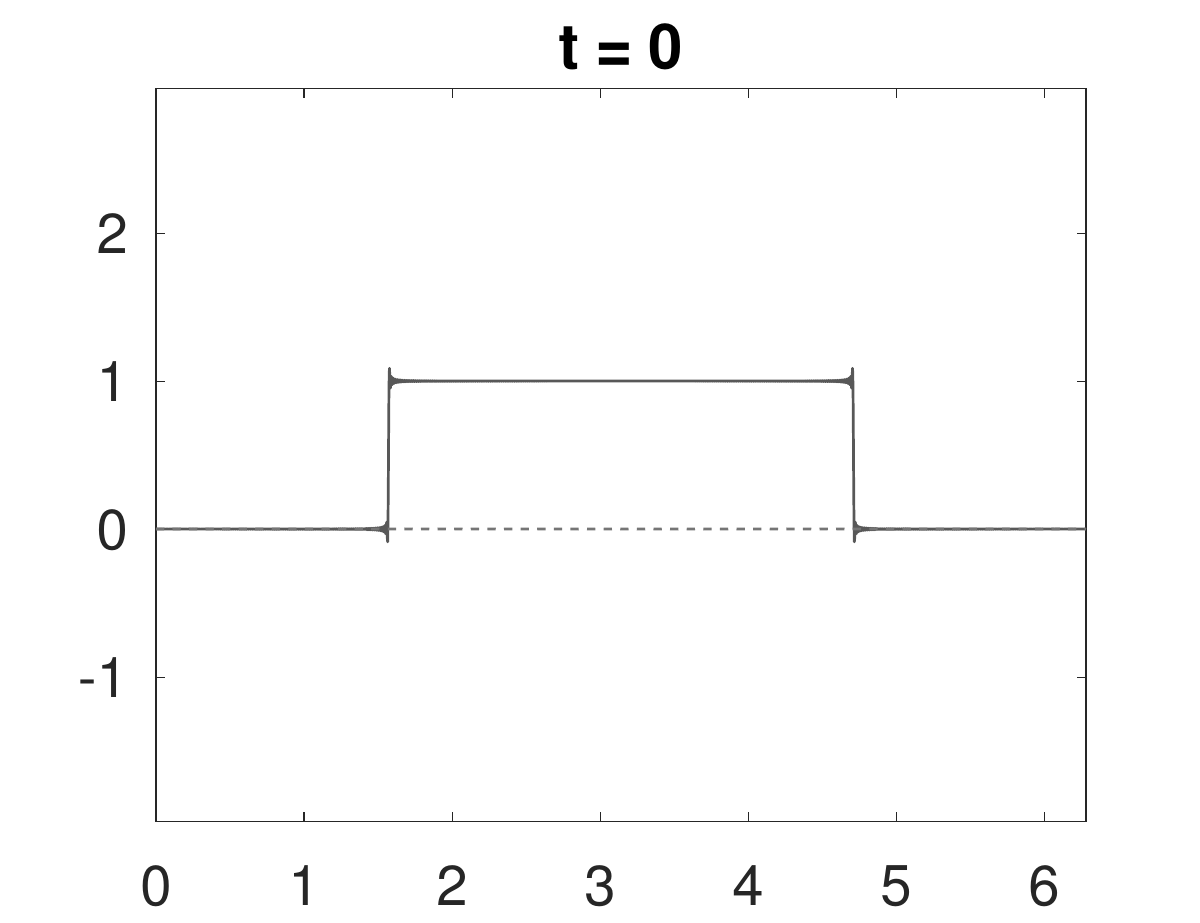}
 \caption{Real (solid) and imaginary (dashed) profiles of the piece-wise constant initial function \eqref{initial example}.}
	\label{F initial}
 \end{figure}

Using Octave, we plot the real and imaginary parts of the solution \eqref{solution example} for $n=3$, $4$, $5$ at the rational time $t= \frac{2\pi}{3}$ by truncating the series and taking around 1000 terms. In Figure \ref{F-n revivals}, we consider $\theta = \frac{1}{4}$ and observe piece-wise constant profiles. This confirms the first part of Theorem~\ref{Main theorem} which implies that the revival effect appears for rational values of $\theta\in(0,1)$. On the other hand for irrational values, the second part of Theorem~\ref{Main theorem} implies that the revival effect breaks down at rational times and the solution becomes continuous on $[0,2\pi]$ and such that $e^{i2\pi\theta}u(0,t) = u(2\pi,t)$, which is exactly what we observe in Figure \ref{F-n no revivals}. In particular, in Figure \ref{F-n no revivals} we see that the solution profiles have the form of continuous but nowhere differentiable functions, indicating that the fractalisation effect occurs at rational times when $\theta$ is irrational. The latter is true at least for $n=3$, with the fractal dimension being $3/2$, as we deduced in Section~\ref{Second-order case and fractal dimensions}. In contrast to the higher-order models, the second-order case exhibits revivals at rational times for any choice of $\theta\in (0,1)$. This is illustrated in Figure~\ref{F-n=2} where we plot the real and imaginary parts of \eqref{solution example}, when $n=2$ and $t = \frac{2\pi}{3}$, and we observe piece-wise constant profiles for both $\theta = \frac{1}{4}$ and $\theta=\frac{\sqrt{2}}{4}$, as expected by \eqref{Second order Correspondence}.

\begin{figure}[H]
	\hspace{-1cm}
	\begin{minipage}[b]{0.45\linewidth}
		\centering	\includegraphics[width=0.73\textwidth]{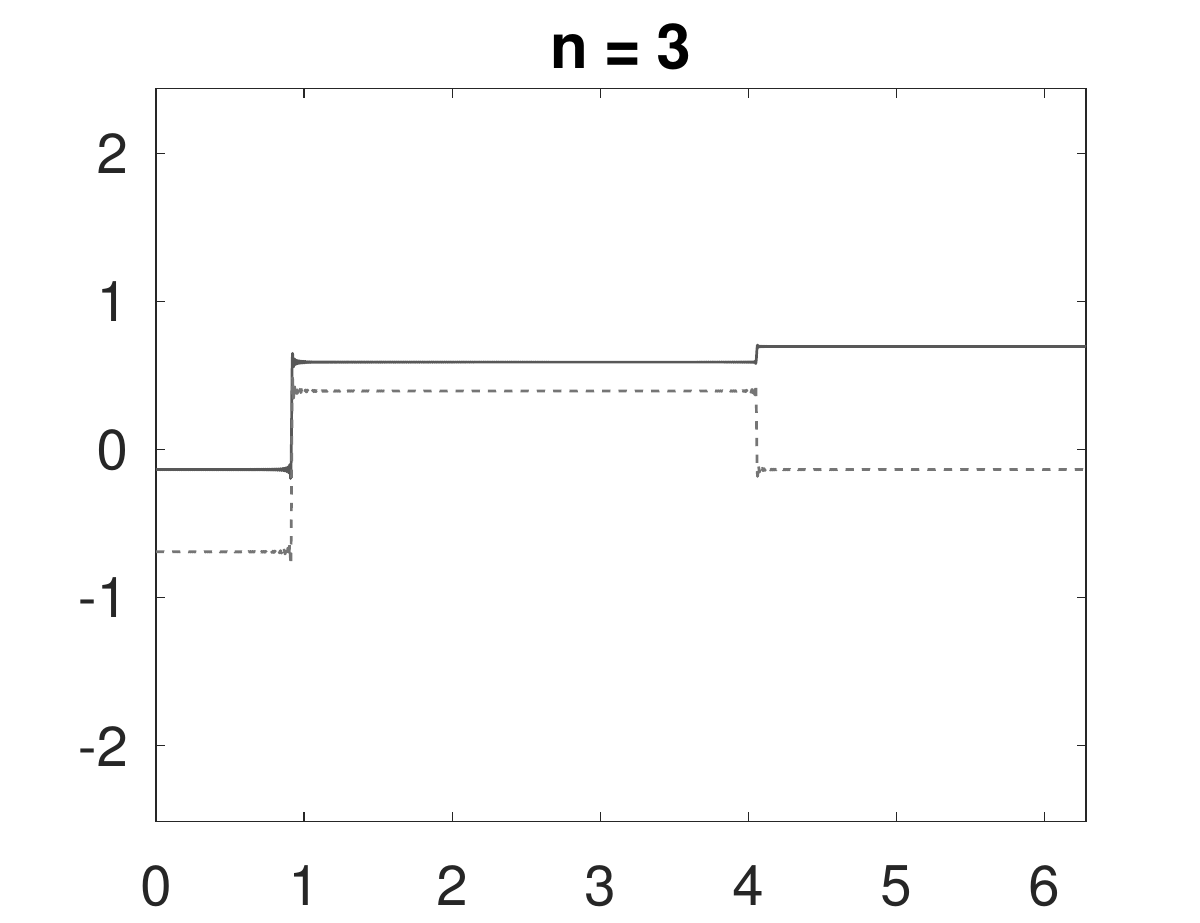}
	\end{minipage}
	\hspace{-2.2cm}
	\begin{minipage}[b]{0.45\linewidth}
		\centering	\includegraphics[width=0.73\textwidth]{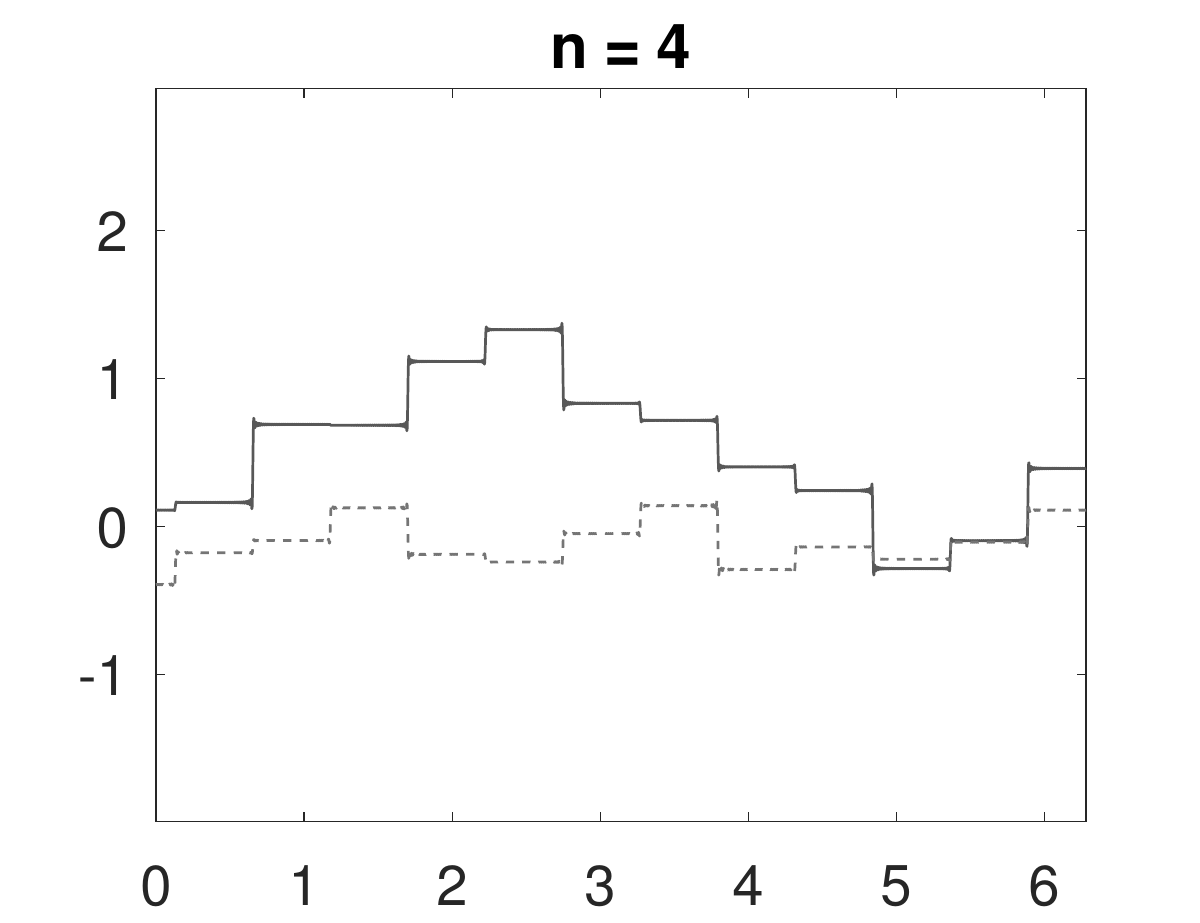}
	\end{minipage}
	\hspace{-2.2cm}
	\begin{minipage}[b]{0.45\linewidth}
		\centering \includegraphics[width=0.73\textwidth]{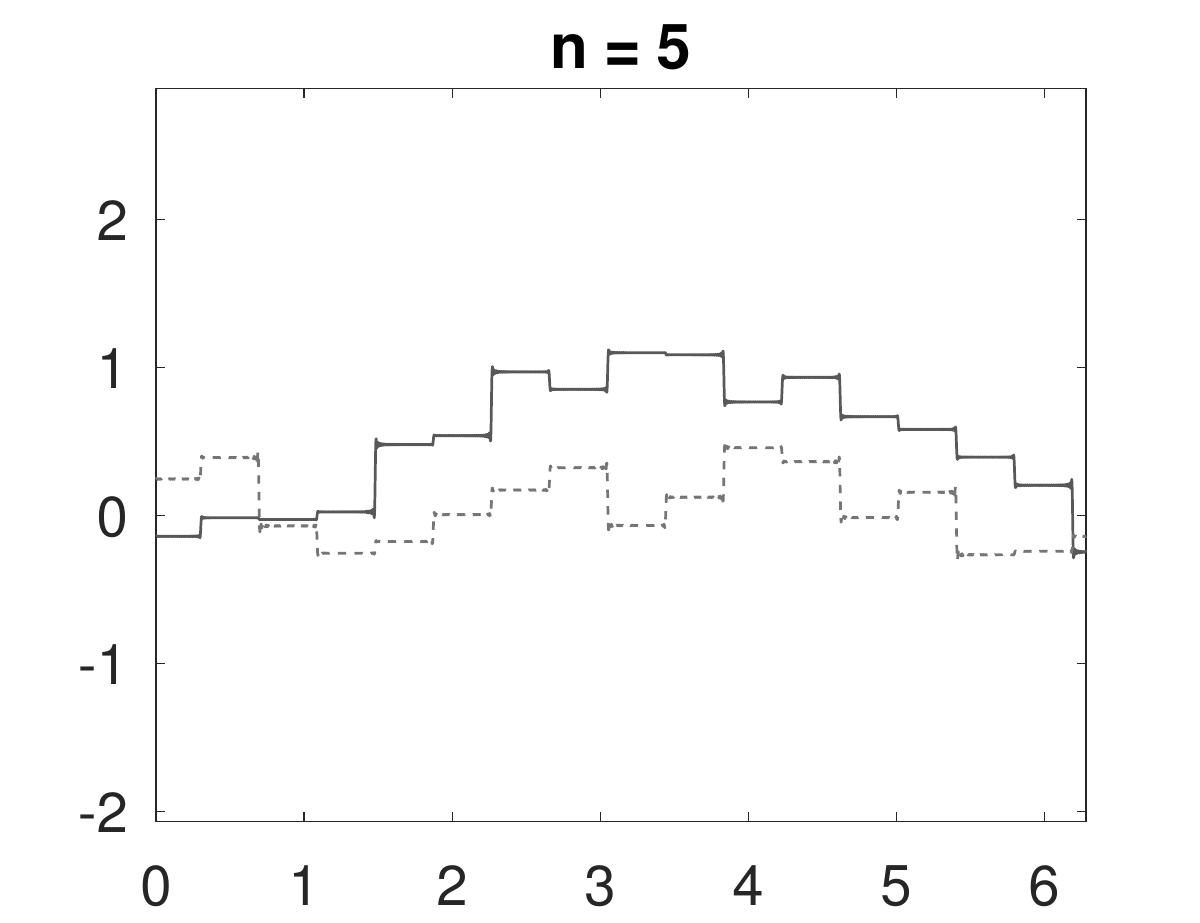}
	\end{minipage}
 \caption{Real (solid) and imaginary (dashed) profiles of the solution \eqref{solution example} at rational time $t=\frac{2\pi}{3}$ when $\theta = \frac{1}{4}$ and $n=3$, $4$, $5$.}
 \label{F-n revivals}
\end{figure}

\begin{figure}[H]
	\hspace{-1cm}
	\begin{minipage}[b]{0.45\linewidth}
		\centering	\includegraphics[width=0.73\textwidth]{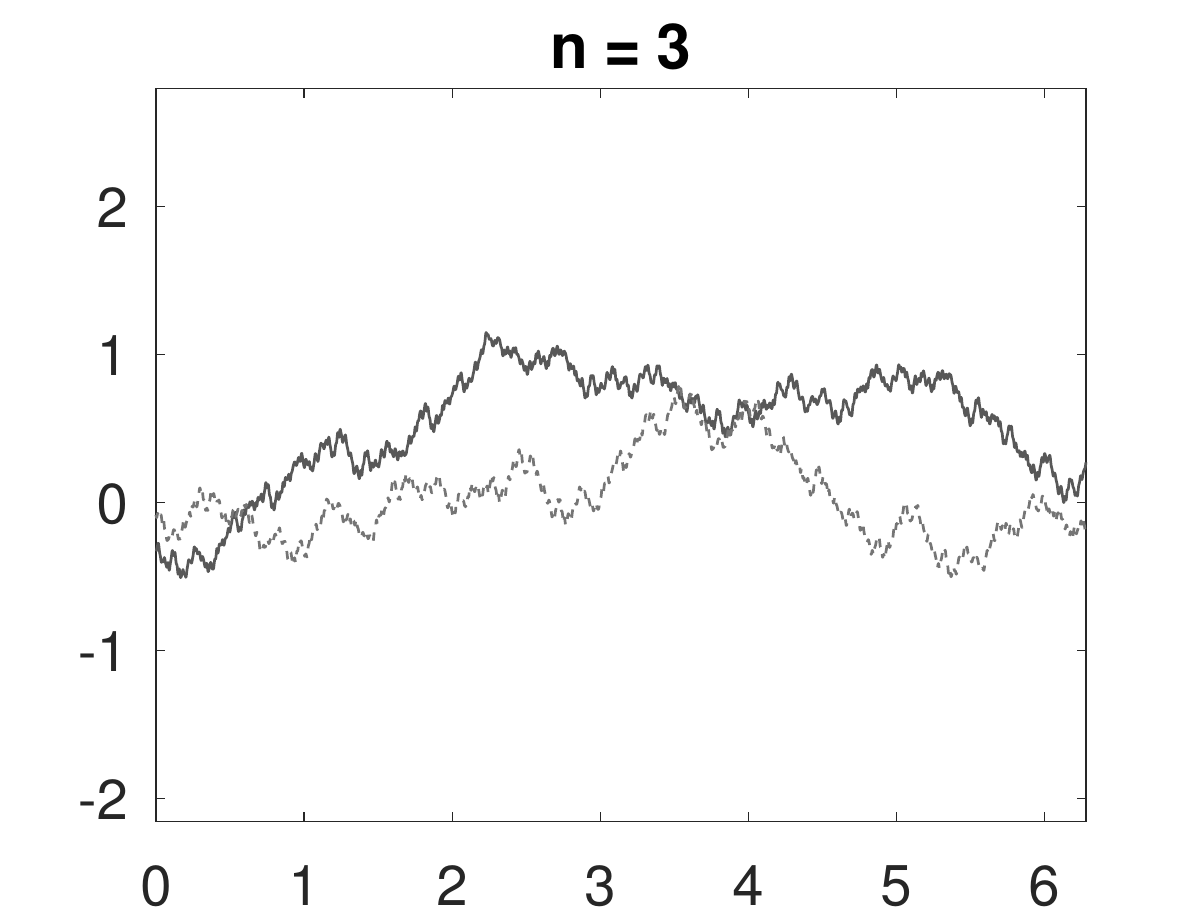}
	\end{minipage}
	\hspace{-2.2cm}
	\begin{minipage}[b]{0.45\linewidth}
		\centering	\includegraphics[width=0.73\textwidth]{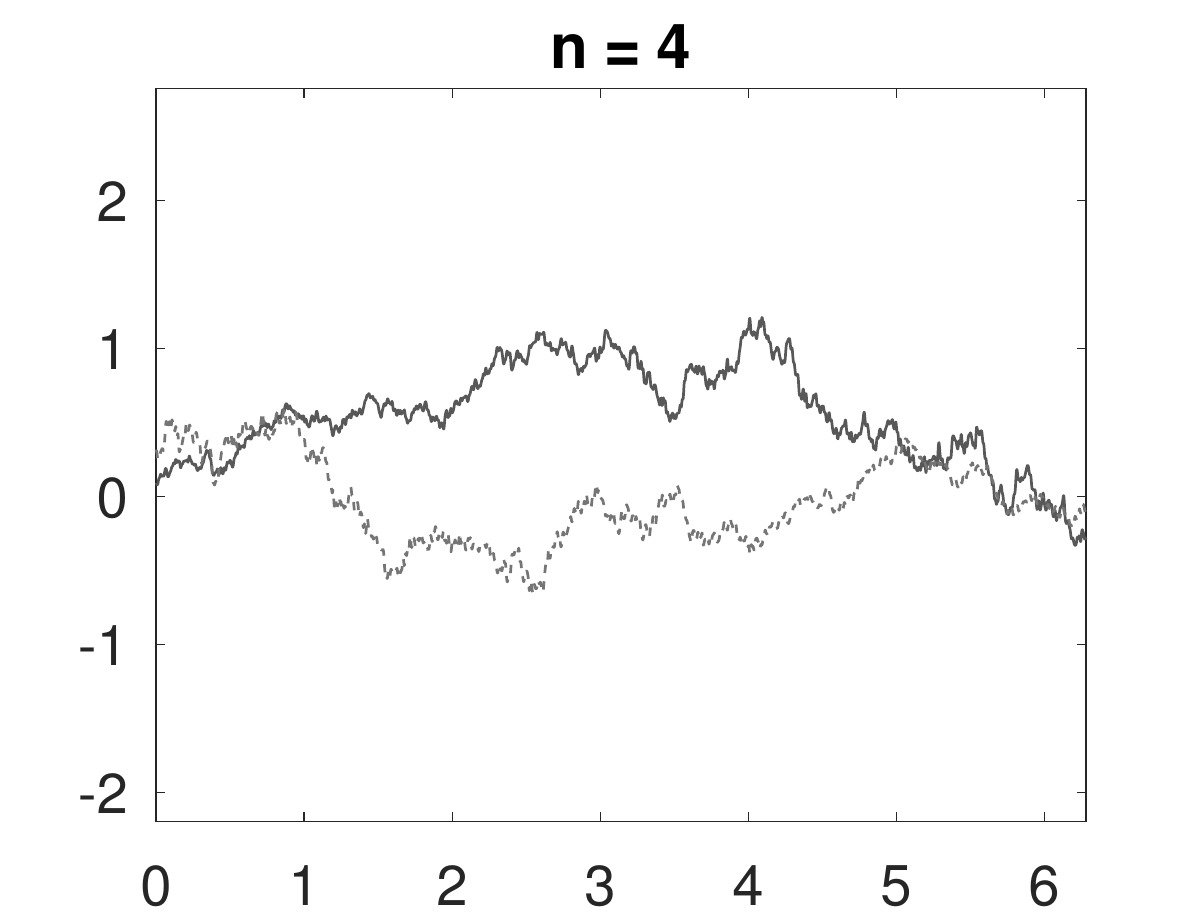}
	\end{minipage}
	\hspace{-2.2cm}
	\begin{minipage}[b]{0.45\linewidth}
		\centering \includegraphics[width=0.73\textwidth]{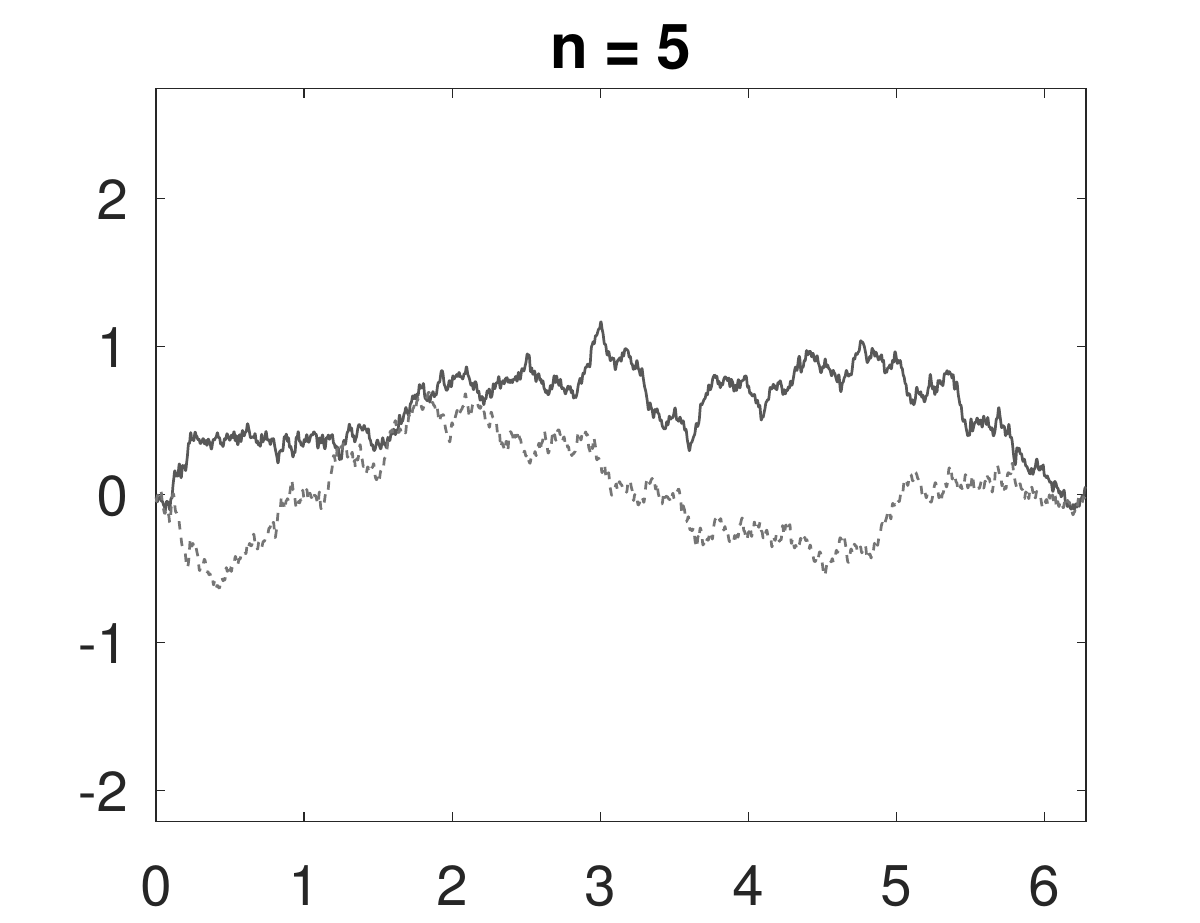}
	\end{minipage}
 \caption{Real (solid) and imaginary (dashed) profiles of the solution \eqref{solution example} at rational time $t=\frac{2\pi}{3}$ when $\theta = \frac{\sqrt{2}}{4}$ and $n=3$, $4$, $5$.}
 \label{F-n no revivals}
\end{figure}

\begin{figure}[H]
	\begin{minipage}[b]{0.45\linewidth}
		\centering	\includegraphics[width=0.73\textwidth]{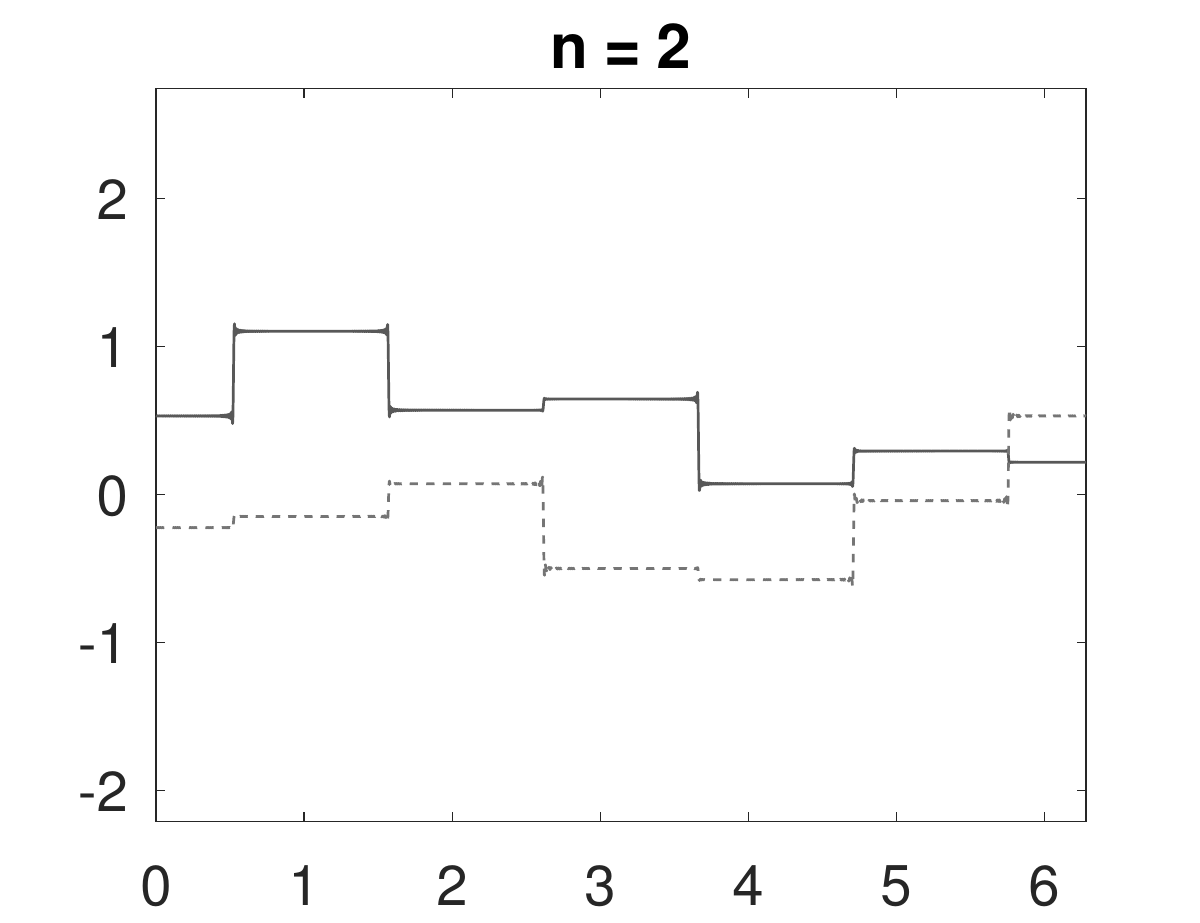}
	\end{minipage}
	%\hspace{-1cm}
	\begin{minipage}[b]{0.45\linewidth}
		\centering	\includegraphics[width=0.73\textwidth]{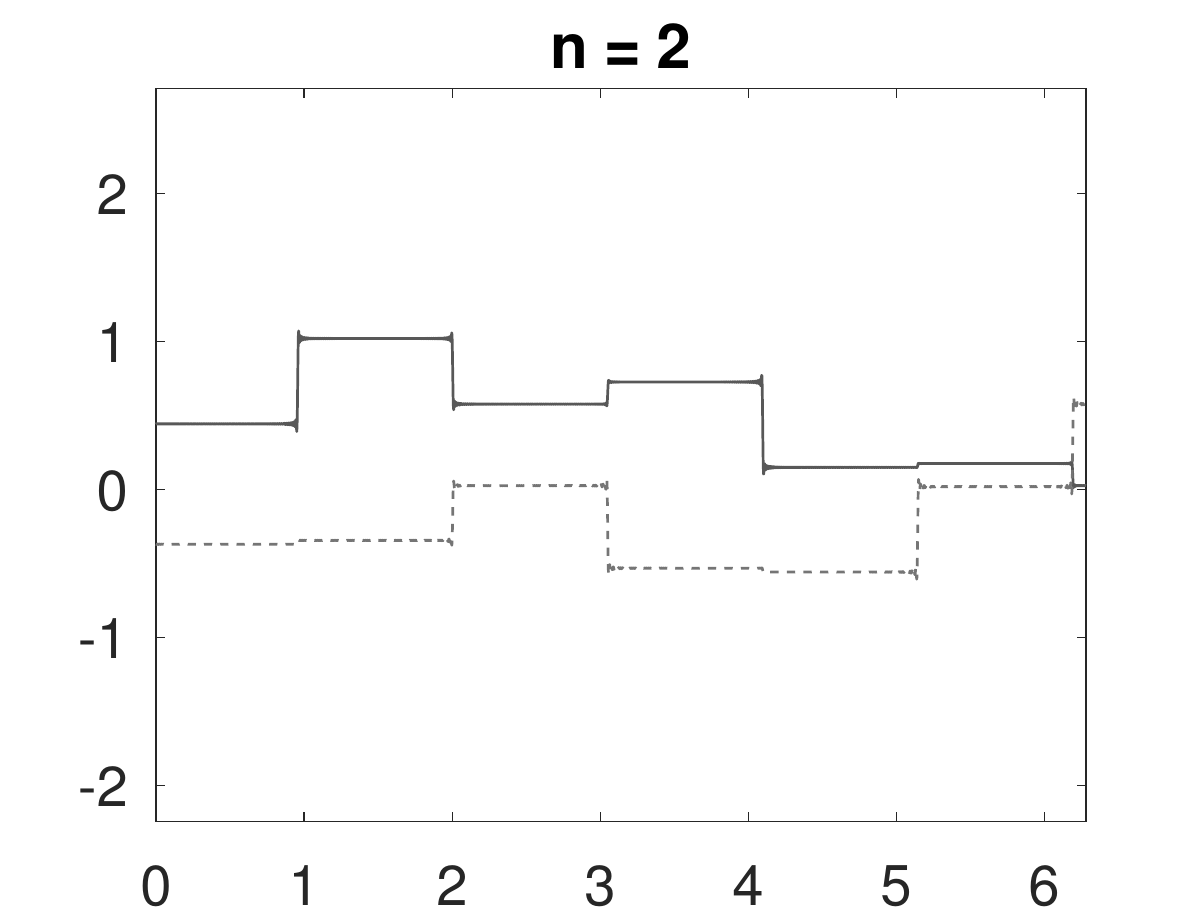}
	\end{minipage}
 \caption{Real (solid) and imaginary (dashed) profiles of the solution \eqref{solution example} at rational time $t=\frac{2\pi}{3}$ when $n=2$ and $\theta =\frac{1}{4}$ or $\theta = \frac{\sqrt{2}}{4}$, respectively.}
 \label{F-n=2}
\end{figure}

\section{Two non-linear equations}\label{Revivals in non-linear equations}

The revival and fractalisation phenomena have also been considered in the non-linear regime. In  \cite{erdogan2013talbotpaper, erdougan2013global}, Erdo{\u{g}}an and Tzirakis examined the Talbot effect under periodic boundary conditions on $[0,2\pi]$ in the context of the cubic non-linear Schr\"{o}dinger (NLS) equation 
\begin{equation*}
	\partial_{t}u(x,t) = i\partial_{x}^{2}u(x,t) + i |u(x,t)|^{2} u(x,t)
\end{equation*}
and the Korteweg–de Vries (KdV) equation
\begin{equation*}
	\partial_{t}u(x,t) = - \partial_{x}^{3}u(x,t) - 2 u(x,t) \partial_{x}u(x,t),
\end{equation*}
respectively. The results in \cite{erdogan2013talbotpaper} and \cite{erdougan2013global} showed that for initial data of bounded variation, the difference between the solution and the linear evolution, at any positive time, is more regular than the initial function. Thus, in both cases, the revival/fractalisation dichotomy is characterised in terms of change in the regularity of the solution at rational/irrational times. Numerical evidence on the Talbot effect for both equations can be found in the papers of  \cite{chen2013dispersion} and \cite{chen2014numerical} by Chen and Olver. 

Similar to the FLS equation, the cubic NLS equation is invariant under the Galilean transformation 
\begin{equation*}
	u(x,t) = e^{-i\theta^{2} t} e^{i\theta x} \mathcal{T}_{2\theta t} z(x,t).
\end{equation*}
Therefore, we can extend the results in \cite{erdogan2013talbotpaper} from the periodic to the quasi-periodic setting. 

\begin{theorem}
	\label{Quasi Periodic NLS Revival}
	Fix $\theta \in (0,1)$ and consider the quasi-periodic problem for the NLS equation
 \begin{equation}
	\label{Quasi-periodic NLS}
	\begin{aligned}
		&\partial_{t}u(x,t) = i \partial_{x}^{2}u(x,t)+i|u(x,t)|^{2}u(x,t), \quad u(x,0) = u_{0}(x),\\
		&e^{i2\pi \theta} u(0,t) = u(2\pi,t),\quad e^{i2\pi \theta}\partial_{x} u(0,t) = \partial_{x}u(2\pi,t).
	\end{aligned}
\end{equation}
 Assume that $u_{0}$ is of bounded variation on $[0,2\pi]$. Then, we have the following.
	\begin{itemize}
		\item [(i)] For rational values of $t/2\pi$, the solution $u(x,t)$ is a bounded function with at most countably many discontinuities.
        \item[(ii)] If $t/2\pi$ is an irrational number, then the solution $u(x,t)$ is a continuous function of $x\in[0,2\pi]$ and such that $e^{i2\pi\theta} u(0,t) = u(2\pi,t)$.
	\end{itemize}
\end{theorem}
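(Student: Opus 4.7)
The plan is to follow the same strategy employed in the linear setting: use the Galilean invariance of the cubic NLS to reduce \eqref{Quasi-periodic NLS} to the periodic cubic NLS, and then invoke the revival/continuity dichotomy established by Erdo{\u{g}}an and Tzirakis in \cite{erdogan2013talbotpaper}.

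First I would verify that if $z(x,t)$ solves the periodic cubic NLS
\begin{equation*}
\partial_{t} z(x,t) = i\partial_{x}^{2} z(x,t) + i|z(x,t)|^{2} z(x,t), \qquad z(x,0) = z_{0}(x) := e^{-i\theta x} u_{0}(x),
\end{equation*}
subject to $\partial_{x}^{m} z(0,t) = \partial_{x}^{m} z(2\pi,t)$ for $m = 0,1$, then $u(x,t) = e^{-i\theta^{2} t} e^{i\theta x} \mathcal{T}_{2\theta t} z(x,t)$ solves \eqref{Quasi-periodic NLS}. Three items need to be checked: (a) since $|e^{-i\theta^{2} t} e^{i\theta x}| = 1$ and $\mathcal{T}_{2\theta t}$ preserves the pointwise modulus, $|u(x,t)| = |(\mathcal{T}_{2\theta t} z)(x,t)|$, so the cubic nonlinearity is covariant under the transformation and is transported along with the linear part; (b) the linear Galilean computation is essentially the $P(\lambda) = \lambda^{2}$ case of Lemma~\ref{Correspondece lemma first}, with $s_{\theta} = 2\theta$; (c) the periodic boundary data on $z$, when dressed by $e^{i\theta x}$ and translated, reproduce the quasi-periodic boundary conditions on $u$, exactly as in the discussion around \eqref{Quasi-periodic Transformation}. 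Since $u_{0}$ is of bounded variation and $e^{-i\theta x}$ is smooth, $z_{0}$ is also of bounded variation on $[0,2\pi]$.

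Next I would apply the nonlinear smoothing result of \cite{erdogan2013talbotpaper}: for $z_{0}$ of bounded variation on $[0,2\pi]$, the nonlinear correction
\begin{equation*}
N(x,t) := z(x,t) - e^{it\partial_{x}^{2}} z_{0}(x)
\end{equation*}
lies in $H^{s}_{\mathrm{per}}(0,2\pi)$ for some $s > 1/2$ at every time $t \geq 0$, and so by Sobolev embedding is continuous in $x$. Combined with Theorem~\ref{Periodic Revival Theorem} applied to the linear flow $e^{it\partial_{x}^{2}}z_{0}$ (which, up to the sign convention of the dispersion relation, coincides with $\mathcal{R}_{2}(-t) z_{0}$): at a rational time $t_{r} = 2\pi p/q$, the linear evolution is a finite linear combination of periodic translates of $z_{0}$, hence bounded with at most countably many discontinuities, and adding the continuous $N(\cdot,t_{r})$ preserves this, giving part (i) for $z$; at an irrational time, the linear evolution is continuous and $2\pi$-periodic, and hence $z(\cdot,t)$ inherits the same properties, giving part (ii) for $z$.

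Finally I would pull the conclusions back to $u$ through $u(x,t) = e^{-i\theta^{2} t} e^{i\theta x} \mathcal{T}_{2\theta t} z(x,t)$. Multiplication by the smooth unimodular factor together with the isometric periodic translation preserve boundedness, the cardinality of the discontinuity set, and continuity, so part (i) and the continuity statement in part (ii) transfer directly from $z$ to $u$. The quasi-periodic identity $e^{i2\pi\theta} u(0,t) = u(2\pi,t)$ of part (ii) then drops out of $z(0,t) = z(2\pi,t)$ by a direct substitution into the transformation, using the $2\pi$-periodicity of the extension $z^{*}$ to identify $z^{*}(-2\theta t,t) = z^{*}(2\pi - 2\theta t,t)$. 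The main obstacle I foresee is the careful bookkeeping needed to propagate the bounded-variation hypothesis and the quasi-periodic/periodic boundary data cleanly through the Galilean change of variables, and to match the sign and normalization conventions in the nonlinear smoothing estimate of \cite{erdogan2013talbotpaper} with those used in \eqref{Quasi-periodic NLS}; the smoothing estimate itself is invoked as a black box.
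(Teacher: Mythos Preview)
Your proposal is correct and follows essentially the same approach as the paper: both use the Galilean invariance of the cubic NLS to transform the quasi-periodic problem into the periodic one with initial datum $z_{0}=e^{-i\theta x}u_{0}$, observe that $z_{0}$ inherits bounded variation, and then invoke the periodic result of Erdo\u{g}an and Tzirakis. The only difference is granularity: the paper cites \cite[Theorem~1]{erdogan2013talbotpaper} as a black box for the dichotomy on $z$, whereas you unpack that citation into its two ingredients (nonlinear smoothing of $z - e^{it\partial_{x}^{2}}z_{0}$ into $H^{s}_{\mathrm{per}}$ with $s>1/2$, plus the linear revival/continuity dichotomy from Theorem~\ref{Periodic Revival Theorem}) before transferring back to $u$.
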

\begin{proof}
Due to the Galilean invariance of the cubic NLS equation, the solution to the quasi-periodic problem \eqref{Quasi-periodic NLS} is given by 
\begin{equation*}
    u(x,t) = e^{-i\theta^{2} t} e^{i\theta x} \mathcal{T}_{2\theta t} z(x,t),
\end{equation*}
where $z(x,t)$ solves the cubic NLS with periodic boundary conditions on $[0,2\pi]$
     \begin{equation*}
	\begin{aligned}
		&\partial_{t}z(x,t) = i \partial_{x}^{2}z(x,t)+i|z(x,t)|^{2}z(x,t), \quad z(x,0) = z_{0}(x),\\
		&z(0,t) = z(2\pi,t),\quad \partial_{x} z(0,t) = \partial_{x}z(2\pi,t),
	\end{aligned}
\end{equation*}
and with initial value $z_{0}(x) = e^{-i\theta x} u_{0}(x)$. Since $u_{0}$ is a function of bounded variation, the function $z_{0}$ is also of bounded variation as the product of the non-zero continuous function $e^{-i\theta x}$ and $u_{0}$. Moreover, from \cite[Theorem~1]{erdogan2013talbotpaper}, we know that the statement of the theorem holds for the solution $z(x,t)$ to the periodic problem. Thus, the theorem holds for $u(x,t)$ as well. 
\end{proof}

If we further assume that the function $z_{0}(x) = e^{-i\theta x} u_{0}(x)$ does not belong to any of the Sobolev space $H^{s}_{\text{per}}(0,2\pi)$, for any $s>1/2$, then we can deduce that either the real or the imaginary part of the solution $u(x,t)$ to the quasi-periodic problem \eqref{Quasi-periodic NLS} has fractal dimension $3/2$ at irrational times, since the same is true of the solution to the periodic problem, see \cite[Theorem 1]{erdogan2013talbotpaper}.

Compared to the quasi-periodic cubic NLS, the previous trick does not work in the case of the quasi-periodic KdV equation 
\begin{equation}
	\label{KdV quasi periodic}
	\begin{aligned}
		&\partial_{t}u(x,t) = - \partial_{x}^{3}u(x,t) - 2u(x,t) \partial_{x}u(x,t), \quad u(x,0) = u_{0}(x),\\
		&e^{i2\pi\theta}\partial_{x}^{m}u(0,t) = \partial_{x}^{m}u(2\pi, t), \quad m=0,1,2, \quad \theta\in (0,1).
	\end{aligned}
\end{equation}
Therefore, we describe below two possible ideas in order to examine the revival and fractalisation effects in this case.

Under the transformation
\begin{equation*}
	u(x,t) = e^{i(\theta^{3} t + \theta x)} w(x,t),
\end{equation*}
the quasi-periodic problem \eqref{KdV quasi periodic} is converted to the periodic problem 
\begin{equation}
	\label{KdV Transformed problem}
	\begin{aligned}
		&\partial_{t}w(x,t) =\big( - \partial_{x}^{3} -3i\theta \partial_{x}^{2} + 3\theta^{2} \partial_{x} \big) w(x,t) + N(x,t,w,\partial_{x} w) , \quad w(x,0) = e^{-i\theta x} u_{0}(x),\\
		&\partial_{x}^{m}w(0,t) = \partial_{x}^{m}w(2\pi, t), \quad m=0,1,2,
	\end{aligned}
\end{equation}
where the non-linear term is given by
\begin{equation*}
	N(x,t,w,\partial_{x}w) = -e^{i(\theta^{3}t - \theta x)}\big(2i\theta w^{2}(x,t) + 2w(x,t) \partial_{x}w(x,t)\big).
\end{equation*}

First, we remark that, to our knowledge, there is no result that indicates the well-posedness in $L^{2}(0,2\pi)$ of either of the two initial-boundary value problems \eqref{KdV quasi periodic} and \eqref{KdV Transformed problem}. Therefore, a first direction would be to analyse this aspect. We suspect that the framework provided by Ruzhanski and Tokmagambeto in  \cite{ruzhansky2016nonharmonic} should be valuable for the analysis of the quasi-periodic problem \eqref{KdV quasi periodic}, whereas for the corresponding periodic problem \eqref{KdV Transformed problem} it might be feasible to adopt some of the arguments of Miyaji and Tsutsumi in \cite{MIYAJI20171707} on the analysis of the periodic third-order Lugiato-Lefever equation in conjunction with the more recent work on the Talbot effect for the same equation \cite{cho2023talbot} by Cho, Kim and Seo.

Finally, if we assume that there exists a well-posed setting for either \eqref{KdV quasi periodic} or \eqref{KdV Transformed problem}, then a natural question that arises is if the revival phenomenon breaks whenever $\theta$ is irrational, similar to the corresponding linear problems. In other words, if we fix $\theta$ to be an irrational number in $(0,1)$, then is there a smoothing effect on the solution of either \eqref{KdV quasi periodic} or \eqref{KdV Transformed problem} at rational times ($t/2\pi \in\mathbb{Q}$)?

\section{Conclusion}\label{Conclusion}
The main goal of this work was to examine the revival effect in the context of time-evolution problems with quasi-periodic boundary conditions. Theorem~\ref{Main theorem} establishes that for a piece-wise continuous initial condition in $L^{2}(0,2\pi)$, the quasi-periodic problem \eqref{Quasi-Periodic Problem} of order $n\geq 3$ supports the revival effect at rational times only when the boundary parameter $\theta$ is a rational number. Otherwise, for irrational values of $\theta$, the solution at rational times becomes continuous and quasi-periodic, so the revival (of the initial jump discontinuities) breaks down.

The proof of Theorem~\ref{Main theorem} was obtained as a consequence of combining  Lemma~\ref{Correspondece lemma} and Theorem~\ref{Main theorem 2}. Lemma~\ref{Correspondece lemma} allows to study the revival effect in the quasi-periodic problem \eqref{Quasi-Periodic Problem} by means of the periodic problem \eqref{Periodic problem}. In turn, Theorem~\ref{Main theorem 2} illustrates that the periodic problem \eqref{Periodic problem} supports the revival property if and only if $\theta$ is rational. The latter also implies that periodic linear dispersive PDEs with polynomial dispersion relation of order $n\ge 3$ can exhibit revivals only when the coefficients of the dispersion relation are all rational. 

Further consequences of our approach included the verification and the extension in the quasi-periodic setting and for any $\theta \in (0,1)$ of the Talbot effect in the free linear Schr\"{o}dinger and the cubic non-linear Sch\"{o}dinger equations, respectively. 
\vspace{1.cm}

\small{\textbf{Acknowledgements} The author thanks Lyonell Boulton and Beatrice Pelloni for their useful comments and suggestions on previous versions of the manuscript. The work
was funded by EPSRC through Heriot-Watt University support for Research Associate positions, under the Additional Funding Programme for the Mathematical Sciences. Also, part of this research was conducted during the author's PhD studies which were supported by the Maxwell Institute Graduate School in Analysis and its Applications, a Centre for Doctoral Training funded by EPSRC (grant EP/L016508/01), the Scottish Funding Council, Heriot-Watt University and the University of Edinburgh.}

\printbibliography

\end{document}